\documentclass[10pt,a4paper]{amsart}

\usepackage{graphicx,color}
\usepackage{amsmath,amsthm,amsfonts,
	amssymb,
	mathrsfs,array,
	comment, 
	threeparttable
}
\usepackage{pdflscape}
\usepackage{multirow}
\usepackage{booktabs}
\usepackage{hyperref}

\theoremstyle{definition}
\newtheorem{df}{Definition}[section]

\theoremstyle{plain}
\newtheorem{thm}[df]{Theorem}
\newtheorem{prop}[df]{Proposition}

\newtheorem{lem}[df]{Lemma}
\newtheorem{cor}[df]{Corollary}
\newtheorem{fact}[df]{Fact}
\newtheorem*{prob}{Problem}

\theoremstyle{remark}
\newtheorem{rem}[df]{Remark}

\makeatletter

\@addtoreset{equation}{section}
\makeatother

\newcommand{\N}{\mathbb{N}}
\newcommand{\Z}{\mathbb{Z}}
\newcommand{\R}{\mathbb{R}}
\newcommand{\C}{\mathbb{C}}
\newcommand{\Ha}{\mathbb{H}}

\newcommand{\im}{\sqrt{-1}}

\newcommand{\Ad}{\mathop{\mathrm{Ad}}\nolimits}
\newcommand{\Ann}{\mathop{\mathrm{Ann}}\nolimits}

\newcommand{\diag}{\mathop{\mathrm{diag}}\nolimits}

\newcommand{\gr}{\mathop{\mathrm{gr}}\nolimits}
\newcommand{\Hom}{\mathop{\mathrm{Hom}}\nolimits}

\newcommand{\rank}{\mathop{\mathrm{rank}}\nolimits}

\newcommand{\Span}{\mathop{\mathrm{Span}}\nolimits}

\newcommand{\SU}{\mathrm{SU}}
\newcommand{\SO}{\mathrm{SO}}
\newcommand{\Spin}{\mathrm{Spin}}
\newcommand{\Sp}{\mathrm{Sp}}
\newcommand{\Mp}{\mathrm{Mp}}
\newcommand{\E}{\mathrm{E}}

\newcommand{\e}{\mathfrak{e}}
\newcommand{\f}{\mathfrak{f}}
\newcommand{\g}{\mathfrak{g}}

\allowdisplaybreaks[1]

\newcommand{\Addresses}{{
  \bigskip
  \footnotesize
  \textsc{Graduate School of Mathematical Sciences, the University of Tokyo, 3-8-1 Komaba, Meguro, Tokyo 153-8914, Japan}\par\nopagebreak
  \textit{E-mail}: \texttt{tamori@ms.u-tokyo.ac.jp}
}}

\title[Classification of minimal representations]%
	{Classification of minimal representations of real simple Lie groups
	}
\author[H. Tamori]{Hiroyoshi TAMORI}
\date{}
\newcommand{\keyword}[1]{\textit{Key Words and Phrases:} #1}
\newcommand{\MSC}[1]{\textit{Mathematics Subject Classification $2010$:} #1}
\begin{document}
\begin{abstract}
Based on an idea in [Gan--Savin, Represent. Theory (2005)], 
we give a classification of minimal representations of connected simple real Lie groups not of type $A$. 
Actually, we prove that there exist no new minimal representations up to infinitesimal equivalence. \\
\MSC{22E46.}\\
\keyword{Minimal representation; Reductive group.}
\end{abstract}
\maketitle
\section{introduction}\label{section:intro}
Let $G$ be a connected simple real Lie group not of type $A$, and 
$\mathfrak{g}=\mathfrak{k}+\mathfrak{p}$ a Cartan decomposition of $\mathfrak{g}:=Lie(G)$, and $K$ the analytic subgroup with Lie algebra $\mathfrak{k}$. 
For simplicity, assume that the complexification $\mathfrak{g}_{\C}$ of $\mathfrak{g}$ is simple. 
An irreducible admissible representation of $G$ is called minimal 
if the annihilator of the underlying $(\mathfrak{g}_{\C},K)$-module is the Joseph ideal $J_0$ \cite{Jos76}, 
which is the unique completely prime two-sided ideal whose associated variety is the closure of the minimal nilpotent orbit $\mathcal{O}_{\min}$ in $\mathfrak{g}_{\C}$ \cite[Theorem 3.1]{GS04} (see 
Section \ref{section:preliminaries} for the precise definitions). 

For $G=\Mp(n,\R)$ (the connected double cover of the real symplectic group $\Sp(n,\R)$), 
the two irreducible components of 
the Weil representation, which is also referred to as the harmonic, Segal--Shale--Weil, oscillator, or metaplectic representation, 
are classical examples of minimal representations. 

Unitary minimal representations are the smallest ``unipotent'' representations, 
and in the viewpoint of the Kirillov--Kostant orbit method they are supposed to be attached to real orbits included in $\mathcal{O}_{\min}$. 
From such qualities, 
minimal representations for various simple real Lie groups have been studied and constructed in various ways (see \cite{BSZ06, BZ91, BJ98, Bry98, BK94D, BK94E, BK94M, BK95, EPWW85, GS05, Gon82, GW94, GW96, HKM14, HKMO12, Hua95, HL99, Kaz90, KS90, Kob11a, KM11, KO98, KO03i, KO03ii, KO03iii, Kos90, Li00, LS08, Sab96, Sal06, Tor97, Vog81, Vog94} for example). 

An easy necessary condition for the existence of minimal $(\mathfrak{g}_{\C}, K)$-modules comes from their associated varieties. 
That is, if the intersection of the minimal nilpotent orbit $\mathcal{O}_{\min}$ and $\mathfrak{p}_{\C}$, as a subset of $\mathfrak{g_{\C}}$, is empty, 
then there exist no minimal $(\mathfrak{g}_{\C},K)$-modules. 
Moreover, R.~Howe and D.~Vogan \cite[Theorem 2.13]{Vog81} proved that for $\mathfrak{g}=\mathfrak{so}(p,q)$ with $p, q\ge 4$ and $p+q$ odd, there exists no irreducible $(\mathfrak{g}_{\C},K)$-module whose Gelfand--Kirillov dimension is $p+q-3$, namely half the complex dimension of $\mathcal{O}_{\min}$. 

The above previous studies on the construction of minimal representations imply that the converse to the non-existence statement holds: 
if $\mathfrak{g}$ is not of type $A$, 
not isomorphic to $\mathfrak{so}(p,q)$ with $p, q\ge 4$ and $p+q$ odd, and 
$\mathcal{O}_{\min}\cap\mathfrak{p}_{\C}\neq\emptyset$, 
then there exists a minimal $(\mathfrak{g}_{\C}, K)$-module for some covering $K$. 

The aim of this article is to give a negative answer, which we could not find in the literature, to the following problem. 
\begin{prob}\label{prob}
Are there new minimal $(\mathfrak{g}_{\C},K)$-modules up to isomorphism?
\end{prob}
The classification is given in Theorem \ref{main-thm}. 
The number of the isomorphism classes of minimal $(\mathfrak{g}_{\C},K)$-modules for simply connected $K$ is given in Table \ref{number}. 
	
	\begin{table}[ht]
	\begin{center}
	\label{number}
	\caption{The number of the isomorphism classes of minimal $(\mathfrak{g}_{\C},K)$-modules for simply connected $K$}
	\begin{tabular}{cc}\toprule
		$\mathfrak{g}$&number
		\\\hline\hline
		$\mathfrak{sp}(n,\R)(n\ge 2)$&$4$
		\\\hline
		$\mathfrak{so}(p,2)(p\ge 5), \mathfrak{so}^*(2n)(n\ge 4), \e_{6(-14)}, \e_{7(-25)}$&$2$
		\\\hline 
		$\mathfrak{so}(p,q)(p, q\ge 3, p+q\ge 8 \text{ even}),$&$1$\\
		$\mathfrak{so}(p,3)(p\ge 4 \text{ even}), \e_{6(6)}, \e_{6(2)},$\\
		$\e_{7(7)}, \e_{7(-5)}, \e_{8(8)}, \e_{8(-24)}, \f_{4(4)}, \g_{2(2)}$
		\\\hline
		$\mathfrak{sp}(n)(n\ge 2), \mathfrak{so}(n)(n\ge 7), \e_6, \e_7, \e_8, \f_4, \g_2,$&$0$\\
		$\mathfrak{so}(n,1) (n\ge 6), \mathfrak{sp}(p,q) (p,q\ge 1), \e_{6(-26)}, \f_{4(-20)},$\\
		$\mathfrak{so}(p,q) (p,q\ge 4, p+q \text{ odd})$\\\hline
		$\mathfrak{sp}(n,\C)(n\ge 2)$&$2$
		\\\hline
		$\mathfrak{so}(n,\C)(n\ge 7), \e_6(\C), \e_7(\C), \e_8(\C), \f_4(\C), \g_2(\C)$&$1$\\\bottomrule
	\end{tabular}
	\end{center}
	\end{table}

By the classification and previous work on each minimal representation, 
it follows that 
the $K$-type decomposition of any minimal $(\mathfrak{g}_{\C},K)$-module is written as 
\begin{equation}\notag
\bigoplus_{n\ge 0}V(\mu_0+n\beta)
\end{equation}
where $\beta$ is a highest weight of $K$-module $\mathfrak{p}_{\C}$ and 
$V(\mu_0+n\beta)$ denotes the irreducible $K$-module with highest weight $\mu_0+n\beta$ (see Theorem \ref{main-thm}(\ref{K-type})). 
The minimal $K$-type $V(\mu_0)$ is written in Table \ref{classification1} or \ref{classification2}. 
Moreover, 
all minimal $(\mathfrak{g}_{\C},K)$-modules are unitarizable (see Corollary \ref{unitarizability}) 
although unitarizability is not required in the definition of minimal $(\mathfrak{g}_{\C},K)$-modules (see Definition \ref{df:min}), 
and the unitary globalizations are still irreducible as representations of minimal parabolic subgroups when $\mathfrak{g}_{\C}$ is simple and the real rank $\R$-$\rank(\mathfrak{g})$ is not less than three (see Corollary \ref{irreducibility}). 

Let us mention relevant classification results. 
When the Lie group $G$ is the universal cover of $\Sp(n,\R)$ or that of $\SO_0(p,q)$, J.-S.~Huang and J.-S.~Li \cite{HL99} classified irreducible unitary representations of which the associated varieties of the annihilators are $\mathcal{O}_{\min}$ in terms of local theta lifting. 
Since the underlying $(\mathfrak{g}_{\C}, K)$-modules of them agree with all minimal $(\mathfrak{g}_{\C}, K)$-modules, 
an irreducible $(\mathfrak{g}_{\C}, K)$-module whose associated variety is $\mathcal{O}_{\min}$ 
is unitarizable exactly when the annihilator is completely prime (see Remark \ref{unitarizability-complete prime} for the precise statement). 
We must note that 
for $\mathfrak{g}=\e_{6(2)}, \e_{7(-5)}$ or $\e_{8(-24)}$ minimal $(\mathfrak{g}_{\C},K)$-modules were classified by W.~T.~Gan and G.~Savin \cite[Proposition 12.11]{GS05}, 
and that for exceptional groups with finite center and $\R$-$\rank(\mathfrak{g})\ge 3$, 
minimal unitary representations were classified by H.~Salmasian \cite[Proposition 5]{Sal06}. 

Our argument is based on the idea of W.~T.~Gan and G.~Savin. 
They obtained a relation of Casimir elements for two simple components of $\mathfrak{k}_{\C}$ modulo the Joseph ideal $J_0$, and considered $K$-types satisfying the relation. 
The key point of their proof is to apply a proposition by B.~Kostant: if two minimal $(\mathfrak{g}_{\C},K)$-modules have a common $K$-type, then they are isomorphic when $\mathfrak{g}_{\C}$ is simple not of type $A$, $\mathcal{O}_{\min}\cap\mathfrak{p}_{\C}\neq\emptyset$ and $G/K$ is a non-Hermitian symmetric space. 
As is implicit in \cite{GS05}, the proposition can be generalized to any connected simple real Lie group $G$ not of type $A$ similarly. 
For the convenience of the reader, we write the proof in Section \ref{section:criterion} (see Proposition \ref{criterion}). 

In Section \ref{section:classification}, the classification (Theorem \ref{main-thm}) is given by applying Proposition \ref{criterion}. 
We will consider the action of the center $Z(\mathfrak{k}_{\C})$ of the universal enveloping algebra of $\mathfrak{k}_{\C}$, 
then show that for each minimal $(\mathfrak{g}_{\C},K)$-module $\pi$, there exists a $K$-type in $\pi$ which also occurs in one already known (see Theorem \ref{same-K}). 

{\it Notation}: $\N:=\{0,1,2,\ldots\}$.

\section*{Acknowledgements}
The author is deeply grateful to his advisor Prof. Toshiyuki Kobayashi for helpful comments and warm encouragement. 
The author expresses his sincere thanks to Dr. Yoshiki Oshima and Dr. Masatoshi Kitagawa for inspiring discussions. 
This work was supported by JSPS KAKENHI Grant Number 17J01075 and the Program for Leading Graduate Schools, MEXT, Japan. 

\section{preliminaries}\label{section:preliminaries}
In this section, we recall the definition of minimal representations.

Let $\mathfrak{g}_{\C}$ be a simple complex Lie algebra and $U(\mathfrak{g}_{\C})$ the universal enveloping algebra of $\mathfrak{g}_{\C}$. 
We identify $\mathfrak{g}_{\C}$ and its dual $\mathfrak{g}_{\C}^{\vee}$ via the Killing form. 
A.~Joseph \cite{Jos76} proved that if $\mathfrak{g}_{\C}$ is not of type $A$
there exists uniquely a completely prime two-sided ideal $J_0$ 
of which the associated variety is the closure of the minimal nilpotent orbit $\mathcal{O}_{\min}$ in $\mathfrak{g}_{\C}$ (cf. \cite[Theorem 3.1]{GS04}). 
Moreover, the ideal $J_0$ was shown to be maximal (hence primitive) and 
its infinitesimal character was computed (see Table \ref{infinitesimal character}, which is cited from \cite[Table]{Jos76}). 
The ideal $J_0$ is called the Joseph ideal. 

Let $G$ be a connected simple real Lie group, and $\mathfrak{g}=Lie(G)$. 
We fix a Cartan involution $\theta$ of $\mathfrak{g}$. 
We write $\mathfrak{g}=\mathfrak{k}+ \mathfrak{p}$ for the Cartan decomposition corresponding to the Cartan involution $\theta$. 
Let us denote by $\mathfrak{g}_{\C}=\mathfrak{k}_{\C}+ \mathfrak{p}_{\C}$ its complexification and by $K$ the analytic subgroup of $G$ with $Lie(K)=\mathfrak{k}$, which is not compact if and only if $G$ is simply connected and $G/K$ is Hermitian. 

We will consider $(\mathfrak{g}_{\C},K)$-modules for not necessarily compact $K$:
\begin{df}\label{df:g,K}
Let $V$ be a $\C$-vector space which is a $\mathfrak{g}_{\C}$-module and a $K$-module. 
We will write $\Ad$ 
for the adjoint representation of $K$ on $\mathfrak{g}_{\C}$.
Then $V$ is called $(\mathfrak{g}_{\C},K)$-module if the following hold: 
\begin{itemize}
\item $k(Xv)=(\Ad(k)X)(kv)$ for all $k\in K, X\in \mathfrak{g}_{\C}$ and $v\in V$.
\item The linear span $\Span(Kv)$ of $Kv$ is finite-dimensional and 
when we put on $\Span(Kv)$ the topology of Hausdorff topological vector space 
the action of $K$ on the submodule $\Span(Kv)$ is continuous, 
and $\Span(Kv)$ decomposes to the direct sum of irreducible $K$-submodules, for all $v\in V$.
\item $\left.\frac{d}{dt}\right|_{t=0}(\exp(tX)v)=Xv$ for all $X\in \mathfrak{k}, v\in V$.
\end{itemize}
\end{df}

\begin{df}\label{df:min}
Assume that $\mathfrak{g}$ is a simple real Lie algebra not of type $A$. 
\begin{enumerate}
\item 
Let $\mathfrak{g}_{\C}$ be simple. 
An irreducible $(\mathfrak{g_{\C}},K)$-module is called \emph{minimal} if its annihilator is the Joseph ideal $J_0$. 
\item 
Let $\mathfrak{g}_{\C}$ be not simple. In other words, $\mathfrak{g}$ carries a complex Lie algebra structure. 
Then we can take an isomorphism from $\mathfrak{g}_{\C}$ to $\mathfrak{g}\oplus \mathfrak{g}$ 
as complex Lie algebras and 
it induces the isomorphism $U(\mathfrak{g}_{\C})\cong U(\mathfrak{g})\otimes U(\mathfrak{g})$. 
An irreducible $(\mathfrak{g_{\C}},K)$-module is called \emph{minimal} if its annihilator is equal to $J_0\otimes U(\mathfrak{g})+ U(\mathfrak{g})\otimes J_0$ via the isomorphism. 
This condition is independent of the choice of isomorphism $\mathfrak{g}_{\C}\cong \mathfrak{g}\oplus \mathfrak{g}$ because of the uniqueness of the Joseph ideal. 
\end{enumerate}
\end{df}
\begin{rem}\label{bimodule}
Let $\mathfrak{g}$ be a complex simple Lie algebra not of type $A$. 
\begin{enumerate}
\item
The two-sided ideal $J_0\otimes U(\mathfrak{g})+ U(\mathfrak{g})\otimes J_0$ is the unique completely prime one whose associated variety is the closure of $\mathcal{O}_{\min}\times\mathcal{O}_{\min}$. 
\item\label{bimodule2}
Via an isomorphism $\mathfrak{g}_{\C}\cong \mathfrak{g}\oplus \mathfrak{g}$ which gives the one between $\mathfrak{k}_{\C}$ and the diagonal subalgebra $\diag \mathfrak{g}$, 
the Harish-Chandra bimodule $U(\mathfrak{g})/J_0$ is a minimal $(\mathfrak{g}_{\C},K)$-module which has a nonzero $K$-fixed vector.
\end{enumerate} 
\end{rem}

	\begin{table}[h]
	\begin{center}
	\begin{threeparttable}
	\caption{The infinitesimal characters of minimal $(\mathfrak{g}_{\C},K)$-modules}
	\label{infinitesimal character}
	\begin{tabular}{cc}\toprule
		$\mathfrak{g}_{\C}$& representative of infinitesimal character
		\\\hline\hline
		$\mathfrak{so}(2n+1,\C)(n\ge 3)$&$\sum_{i=1}^{n-3}\omega_i+1/2\ \omega_{n-2}+1/2\ \omega_{n-1}+\omega_n$\\\hline
		$\mathfrak{sp}(n,\C)(n\ge 2)$&$\sum_{i=1}^{n-1}\omega_i+1/2\ \omega_n$\\\hline
		$\mathfrak{so}(2n,\C)(n\ge 4)$&$\sum_{i=1}^{n-3}\omega_i+\omega_{n-1}+\omega_{n}$\\\hline
		$\e_n(\C)(n=6,7,8)$&$(\sum_{i=1}^{3}+\sum_{i=5}^{n})\omega_i$\\\hline
		$\f_4(\C)$&$1/2\ \omega_1+1/2\ \omega_2+\omega_3+\omega_4$\\\hline
		$\g_2(\C)$&$\omega_1+1/3\ \omega_2$\\\bottomrule
	\end{tabular}
	\small
	\begin{tablenotes}
	\item[*]
	We follow the notation of Bourbaki \cite[pp.250--274]{Bou68} for fundamental weights $\omega_i$.
	\item[*]
	When $\mathfrak{g}_{\C}$ is not simple, a representative of the infinitesimal character is given by the sum of those of simple components. For example, when $\mathfrak{g}_{\C}\cong\g_2(\C)\oplus \g_2(\C)$ the infinitesimal character is the orbit through $(\omega_1+1/3\ \omega_2, \omega_1+1/3\ \omega_2)$.
	\end{tablenotes}
	\end{threeparttable}
	\end{center}
	\end{table}

\begin{df}
An irreducible representation $\Pi$ of $G$ on a complete locally convex Hausdorff topological vector space over $\C$ is called \emph{admissible} if the following hold:
\begin{itemize}
\item For any irreducible finite-dimensional representation $\tau$ of $K$, the space of $K$-intertwining operators from $\tau$ to $\Pi$ is finite-dimensional. 
\item The space of $K$-finite vectors is dense. 
\item For any $K$-finite vector $v$, the linear span $\Span(Kv)$ of $Kv$ decomposes to the direct sum of irreducible $K$-submodules. 
\end{itemize}
When $G$ is a connected simple real Lie group not of type $A$, 
an irreducible admissible representation of $G$ is called \emph{minimal} if the underlying $(\mathfrak{g}_{\C}, K)$-module is minimal. 
\end{df}
\section{criterion for isomorphism}\label{section:criterion}
Let $G$ be a connected simple real Lie group not of type $A$, and $\mathfrak{g}, \mathfrak{k}, \mathfrak{p}, K, \mathcal{O}_{\min}, J_0$ as in Section $\ref{section:preliminaries}$. 
In this section, we give a proof for the $K$-multiplicity freeness of minimal $(\mathfrak{g}_{\C},K)$-modules and a criterion for isomorphism of two minimal $(\mathfrak{g}_{\C},K)$-modules (see Proposition \ref{criterion}). 
They were proven by B.~Kostant and the proof is written in a paper by W.~T.~Gan and G.~Savin \cite{GS05} 
when $\mathfrak{g}_{\C}$ is simple, $\mathcal{O}_{\min}\cap\mathfrak{p}_{\C}\neq\emptyset$ and $G/K$ is non-Hermitian. 
The proof for the general case is similar, but we write it for the convenience of the reader. 
\begin{prop}[{\cite[Proposition 4.10]{GS05}}]\label{criterion}
Let $G$ be a connected simple real Lie group not of type $A$. 
Any minimal $(\mathfrak{g}_{\C},K)$-module is $K$-multiplicity free: every irreducible representation of $K$ occurs with multiplicity at most one. 
Moreover, if two minimal $(\mathfrak{g}_{\C},K)$-modules have a common $K$-type, 
then they are isomorphic. 
\end{prop}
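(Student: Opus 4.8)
The plan is to extract both assertions from the geometry of the associated variety together with the complete primeness of $J_0$. Let $V$ be a minimal $(\mathfrak{g}_{\C},K)$-module and fix a $K$-stable good filtration, so that $\gr V$ is a finitely generated graded $S(\mathfrak{p}_{\C})$-module that is simultaneously a $K$-module. Since the annihilator of $V$ is $J_0$, the comparison between the associated variety of a module and that of its annihilator (Vogan) shows that $\gr V$ is supported on $\overline{\mathcal{O}_{\min}}\cap\mathfrak{p}_{\C}$. First I would record the geometry of this set: we may assume $\mathcal{O}_{\min}\cap\mathfrak{p}_{\C}\neq\emptyset$, since otherwise no minimal module exists, so it contains a highest weight vector $X_{\beta}$ of the $K$-module $\mathfrak{p}_{\C}$, where $\beta$ is the highest weight appearing in the introduction. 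I would check that the nonzero $K_{\C}$-orbits in $\overline{\mathcal{O}_{\min}}\cap\mathfrak{p}_{\C}$ are minimal nilpotent $K_{\C}$-orbits, that (away from the Hermitian splitting, treated separately) this variety is a single orbit closure, and that this orbit is spherical.

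Granting sphericity, the coordinate ring $\C[\overline{\mathcal{O}_{\min}}\cap\mathfrak{p}_{\C}]$ is multiplicity free as a $K_{\C}$-module, and is in fact the ``string'' $\bigoplus_{n\ge 0}V(n\beta)^{*}$, with multiplication by $X_{\beta}$ realizing the degree-raising map. The second step is to show that $\gr V$ is, up to a twist, a rank-one module over this ring, so that it too is multiplicity free and of the form $\bigoplus_{n\ge 0}V(\mu_{0}+n\beta)$ for a single lowest $K$-type $\mu_{0}$; here the complete primeness of $J_0$ is essential, since it makes $\gr(U(\mathfrak{g}_{\C})/J_0)$ the reduced, irreducible coordinate ring $\C[\overline{\mathcal{O}_{\min}}]$, a domain, and this forces the module to have multiplicity one along the orbit. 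Because $K$-multiplicities are preserved on passing to the associated graded, $\dim\Hom_{K}(\tau,V)=\dim\Hom_{K}(\tau,\gr V)\le 1$ for every irreducible $\tau$, which is the first assertion.

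For the isomorphism criterion, let $V$ and $V'$ be minimal with a common $K$-type $\tau$. By the previous step their $K$-spectra are the strings $\{\mu_{0}+n\beta\}$ and $\{\mu_{0}'+n\beta\}$, so the common type forces $\mu_{0}-\mu_{0}'\in\Z\beta$ and the two strings to overlap. I would then pin down the action of $\mathfrak{p}_{\C}$ along a string: the quadratic relations defining $J_0$ determine, up to scalars, how the lowest weight vector of $\mathfrak{p}_{\C}$ lowers and $X_{\beta}$ raises between consecutive steps, and the fact that the lowering component must vanish at the bottom of the string expresses $\mu_{0}$ through the infinitesimal character of $J_0$. Since $V$ and $V'$ share that infinitesimal character, this yields $\mu_{0}=\mu_{0}'$; identifying the common $K$-type and transporting by the (uniquely normalized) raising and lowering operators then produces a $(\mathfrak{g}_{\C},K)$-isomorphism $V\cong V'$, where irreducibility is used to see that $\tau$ generates each module.

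The step I expect to be the main obstacle is the commutative-algebra heart, namely showing that $\gr V$ is a rank-one (line-bundle) module over the spherical orbit $\overline{\mathcal{O}_{\min}}\cap\mathfrak{p}_{\C}$, uniformly across the real forms; the Hermitian case, where $\mathfrak{p}_{\C}=\mathfrak{p}^{+}\oplus\mathfrak{p}^{-}$ is reducible and $\mathcal{O}_{\min}\cap\mathfrak{p}_{\C}$ splits into two $K_{\C}$-orbits, needs separate bookkeeping, as does upgrading the matching of $K$-types and normalized operators in the last step into an honest $\mathfrak{g}_{\C}$-module map rather than merely a $K$-module map.
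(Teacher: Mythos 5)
Your overall strategy (associated variety, sphericity of $K_{\C}$-orbits in $\mathcal{O}_{\min}\cap\mathfrak{p}_{\C}$, string structure of $\gr V$) is a genuinely different, Vogan-style route from the paper's, but as written it has a gap at exactly the point you flag as the ``commutative-algebra heart'', and the inference you offer there is not valid. Complete primeness of $J_0$ is a statement about the \emph{bimodule} $U(\mathfrak{g}_{\C})/J_0$: by Garfinkle it gives $\gr(U(\mathfrak{g}_{\C})/J_0)\cong\C[\overline{\mathcal{O}_{\min}}]$, a domain. But your $V$ is only a quotient of $U(\mathfrak{g}_{\C})/J_0$ by a left ideal, $\gr V$ depends on the chosen good filtration, and it is supported on the \emph{proper} subvariety $\overline{\mathcal{O}_{\min}}\cap\mathfrak{p}_{\C}$; nothing you have said excludes torsion, embedded components, or generic rank greater than one for $\gr V$ along that orbit. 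Establishing rank one (equivalently, that the isotropy representation is a character and $\gr V$ is the section module of a line bundle) is the main theorem-level difficulty of this approach and would have to be argued case by case, as would the fact that $\mathcal{O}_{\min}\cap\mathfrak{p}_{\C}$ can split into several $K_{\C}$-orbits (via Kostant--Sekiguchi this happens beyond the Hermitian situation you set aside). The paper avoids all of this: its Lemma \ref{K-inv} only needs the $K$-invariants of $S(\mathfrak{g}_{\C})/\gr J_0\cong\bigoplus_{n\ge 0}W(n\psi)$, computed by the Cartan--Helgason theorem inside the fixed algebra $U(\mathfrak{g}_{\C})/J_0$, with no filtration on $V$ ever chosen.

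The second gap is the isomorphism step. Even granting that both spectra are strings $\bigoplus_{n\ge 0}V(\mu_0+n\beta)$ with the same $\mu_0$, ``identifying the common $K$-type and transporting by the (uniquely normalized) raising and lowering operators'' does not produce a $(\mathfrak{g}_{\C},K)$-map: you must show the \emph{entire} $\mathfrak{p}_{\C}$-action is determined, which requires knowing $\dim\Hom_K(\mathfrak{p}_{\C}\otimes V(\mu_0+n\beta),V(\mu_0+m\beta))$ for all $m$ (diagonal components $m=n$ genuinely occur, e.g.\ when $\mathfrak{p}_{\C}$ has a zero weight, as for $\mathfrak{so}(p,q)$ with $p,q$ odd), and that $J_0$ pins down all the resulting scalars up to a simultaneous rescaling of the isotypic components --- none of which you verify. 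The clean substitute is precisely what the paper uses: by Harish-Chandra/Lepowsky--McCollum (Fact \ref{LM}), an irreducible $(\mathfrak{g}_{\C},K)$-module is determined by the $U(\mathfrak{g}_{\C})^K$-module $\Hom_K(\tau,\pi)$, and Lemma \ref{K-inv} shows $(U(\mathfrak{g}_{\C})/J_0)^K$ is a polynomial algebra in an element $T$ central in $U(\mathfrak{k}_{\C})$, whose action on $\Hom_K(\tau,\pi)$ depends only on $\tau$; this yields both multiplicity one and the isomorphism criterion simultaneously, without any analysis of strings or lowest $K$-types. Without some replacement for Fact \ref{LM}, your final step is an assertion rather than a proof. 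Finally, note that Proposition \ref{criterion} also covers $\mathfrak{g}$ carrying a complex structure ($\mathfrak{g}_{\C}$ not simple), where $J_0$ must be replaced by $J_0\otimes U(\mathfrak{g})+U(\mathfrak{g})\otimes J_0$ and the relevant variety is the closure of $\mathcal{O}_{\min}\times\mathcal{O}_{\min}$ met with the antidiagonal; your proposal never addresses this case, which the paper handles in Lemma \ref{K-inv}(\ref{K-inv2}).
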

For the proof of Proposition \ref{criterion}, we use Lemma \ref{K-inv} and Facts \ref{non-exist} and \ref{LM} below. 

Let us fix a maximally noncompact Cartan subalgebra $\mathfrak{h}=\mathfrak{t}'\oplus\mathfrak{a}$ with $\mathfrak{t}'\subset\mathfrak{k}, \mathfrak{a}\subset\mathfrak{p}$. 
Choose a positive system for $(\mathfrak{g}_{\C}, \mathfrak{h}_{\C})$ which is compatible with some positive system of the restricted root system for $(\mathfrak{g}, \mathfrak{a})$. 
Write $\psi$ for the highest root of $\mathfrak{g}_{\C}$ (resp. $\mathfrak{g}$) when $\mathfrak{g}_{\C}$ is (resp. is not) simple.
Then the following fact gives characterizations of the condition $\mathcal{O}_{\min}\cap\mathfrak{p}_{\C}=\emptyset$.

\begin{fact}[{\cite[Corollary 5.9]{KO15} or \cite[Proposition 4.1]{Oku15}}]\label{non-exist}
Let $\mathfrak{g}$ be a simple real Lie algebra. Suppose that $\mathfrak{g}_{\C}$ is simple. 
The following are equivalent: 
\begin{enumerate}
\item[(i)] $\mathcal{O}_{\min}\cap\mathfrak{p}_{\C}=\emptyset$.
\item[(ii)] The highest root $\psi$ for $(\mathfrak{g}_{\C},\mathfrak{h}_{\C})$ is not a real root $(\text{i.e. }\psi|_{\mathfrak{t}'}\neq 0)$.
\item[(iii)]
	$\mathfrak{g}$ is compact or isomorphic to $\mathfrak{sl}(n,\Ha) (n\ge 2), \mathfrak{so}(n,1) (n\ge 4), \mathfrak{sp}(p,q) (p,q\ge 1), \e_{6(-26)},$ or $\f_{4(-20)}$.
\end{enumerate}
\end{fact}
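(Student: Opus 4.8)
The plan is to prove the two equivalences (i)$\Leftrightarrow$(ii) and (ii)$\Leftrightarrow$(iii) separately, using throughout that $\mathcal{O}_{\min}$ is the $G_{\C}$-orbit of a highest root vector $E_\psi$ and that $\psi$ is a longest root. I write $\sigma$ for the complex conjugation of $\mathfrak{g}_{\C}$ with respect to $\mathfrak{g}$ and $\theta$ for the complexified Cartan involution; on $\mathfrak{h}_{\C}^*$ one has $\sigma=-\theta$, so that $\theta\alpha=\alpha|_{\mathfrak{t}'}-\alpha|_{\mathfrak{a}}$ and a root $\alpha$ is real precisely when $\sigma\alpha=\alpha$, i.e. $\alpha=\alpha|_{\mathfrak{a}}$.

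For (i)$\Leftrightarrow$(ii) I would first reduce to the split real form. By the Kostant--Sekiguchi correspondence a nilpotent $G_{\C}$-orbit meets $\mathfrak{p}_{\C}$ if and only if it meets $\mathfrak{g}$, since the correspondence preserves the complex saturation; hence (i) is equivalent to $\mathcal{O}_{\min}\cap\mathfrak{g}=\emptyset$. If $\psi$ is real then $\mathfrak{g}_{\C,\psi}$ is $\sigma$-stable, so it contains a real highest root vector $E_\psi\in\mathfrak{g}$, which lies in $\mathcal{O}_{\min}\cap\mathfrak{g}$; passing to the split $\mathfrak{sl}(2,\R)$ it generates and taking a Cayley transform even exhibits a nilpotent directly in $\mathcal{O}_{\min}\cap\mathfrak{p}_{\C}$. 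Conversely, given a real nilpotent $X_0$ generating $\mathcal{O}_{\min}$, I would complete it to a real $\mathfrak{sl}(2,\R)$-triple by Jacobson--Morozov and conjugate its hyperbolic semisimple element into $\mathfrak{a}$, obtaining $H_0\in\mathfrak{a}$ whose $\mathrm{ad}$-grading complexifies to the contact grading attached to $\mathcal{O}_{\min}$. Then $\dim_{\C}\mathfrak{g}_{\C,2}=1$ and $\psi(H_0)=2$, forcing $\mathfrak{g}_{\C,2}=\mathfrak{g}_{\C,\psi}$; since $\mathfrak{g}_{\C,2}$ is the complexification of the real eigenspace $\mathfrak{g}\cap\mathfrak{g}_{\C,2}$ it is $\sigma$-stable, whence $\sigma\psi=\psi$ and $\psi$ is real. (Reality of $\psi$ is independent of the compatible positive system, so the normalisation $\psi(H_0)=2$ is harmless.)

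For (ii)$\Leftrightarrow$(iii) I would first establish the uniform reformulation that $\psi$ is real iff the highest restricted root $\bar\psi:=\psi|_{\mathfrak{a}}$ has multiplicity one. Since $\sigma$ acts as $+1$ on $\mathfrak{a}^*$, it permutes the set of roots restricting to $\bar\psi$, and $\psi$ lies in this set; if the set is a singleton then $\sigma\psi=\psi$, so $\psi$ is real. For the converse, if $\psi$ is real and $\beta$ is any imaginary root ($\theta\beta=\beta$), then $\theta$-invariance of the Killing form gives $\langle\psi,\beta\rangle=\langle\theta\psi,\theta\beta\rangle=-\langle\psi,\beta\rangle=0$; hence for a nonzero nonnegative combination $\gamma$ of black (imaginary) simple roots, a root of the form $\psi-\gamma$ would satisfy $|\psi-\gamma|^2=|\psi|^2+|\gamma|^2>|\psi|^2$, impossible since $\psi$ is a longest root. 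As the roots restricting to $\bar\psi$ are exactly $\psi$ and such $\psi-\gamma$, the multiplicity is one. With this lemma, (ii)$\Leftrightarrow$(iii) reduces to checking that $\operatorname{mult}(\bar\psi)>1$ holds precisely for the algebras in (iii), which I read off from the Satake diagrams and tabulated restricted-root multiplicities: the longest restricted root has multiplicity $4$ for $\mathfrak{sl}(n,\Ha)$, equals the single restricted root of multiplicity $n-1$ for $\mathfrak{so}(n,1)$, and has multiplicity $3,7,8$ for $\mathfrak{sp}(p,q),\f_{4(-20)},\e_{6(-26)}$ respectively, whereas every remaining noncompact simple real form has a multiplicity-one highest restricted root; the compact case is immediate because $\mathfrak{p}_{\C}=0$.

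The hard part is the last step: although the reformulation lemma is uniform and conceptual, showing that multiplicity $>1$ of the highest restricted root singles out exactly the list in (iii) is a finite but genuine inspection over the classification of real simple Lie algebras. Secondary care is needed in fixing conventions (the identity $\sigma=-\theta$ on $\mathfrak{h}_{\C}^*$, the meaning of real, imaginary, and complex roots relative to $\mathfrak{h}=\mathfrak{t}'\oplus\mathfrak{a}$, and compatibility of the positive system with the restricted ordering so that $\psi|_{\mathfrak{a}}=\bar\psi$), and in the clean invocation of Kostant--Sekiguchi together with the contact grading attached to $\mathcal{O}_{\min}$.
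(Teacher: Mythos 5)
Your proposal is correct, but there is nothing in the paper to compare it against line by line: the paper deliberately imports this statement as a \emph{Fact}, citing \cite[Corollary 5.9]{KO15} and \cite[Proposition 4.1]{Oku15}, and the remark immediately after it only notes that the statement also follows from \cite[Proposition 4.3 and Section 10.2]{Tor97} via the Kostant--Sekiguchi correspondence. Your argument is essentially a self-contained reconstruction of that literature proof, and both halves check out. For (i)$\Leftrightarrow$(ii): Kostant--Sekiguchi does preserve $G_{\C}$-saturations, so (i) is equivalent to $\mathcal{O}_{\min}\cap\mathfrak{g}=\emptyset$; if $\psi$ is real then $\mathfrak{g}_{\C,\psi}$ is $\sigma$-stable and contains a real highest root vector; conversely, your Jacobson--Morozov argument works because the semisimple element of a real triple is hyperbolic, hence $G$-conjugate into a restricted-dominant position in $\mathfrak{a}$, all JM triples through $\mathcal{O}_{\min}$ are conjugate to the contact triple $(E_\psi,H_\psi,F_\psi)$ so that $\dim\mathfrak{g}_{\C,2}=1$, and dominance plus compatibility of orderings forces $\psi(H_0)=2$ and $\mathfrak{g}_{\C,2}=\mathfrak{g}_{\C,\psi}$, which is $\sigma$-stable as the eigenspace of the real element $H_0$. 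For (ii)$\Leftrightarrow$(iii), your reformulation ``$\psi$ real $\Leftrightarrow$ $\operatorname{mult}(\bar\psi)=1$'' is sound: the orthogonality $\langle\psi,\beta\rangle=0$ for imaginary $\beta$ follows from $\theta$-invariance of the Killing form, the length computation $|\psi-\gamma|^2=|\psi|^2+|\gamma|^2$ then rules out roots $\psi-\gamma$ with $\gamma\neq0$ a nonnegative combination of black simple roots, and compatibility of the two orderings guarantees both that every root restricting to $\bar\psi$ has this form and that $\psi|_{\mathfrak{a}}=\bar\psi$ in the noncompact case; this lemma also justifies your parenthetical claim that reality of $\psi$ is independent of the compatible positive system, since the multiplicity of the highest restricted root is choice-free. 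The quoted multiplicities ($4$ for $\mathfrak{sl}(n,\Ha)$, $n-1$ for $\mathfrak{so}(n,1)$, $3$, $7$, $8$ for $\mathfrak{sp}(p,q)$, $\f_{4(-20)}$, $\e_{6(-26)}$, and $1$ for the highest restricted root of every other noncompact form, including the type-$A$ forms $\mathfrak{sl}(n,\R)$ and $\mathfrak{su}(p,q)$, which the Fact does not exclude) are all correct, so the final case inspection goes through. In short: where the paper buys the statement off the shelf, you prove it, by the same route the paper's remark points to; the only caveat is that several steps (saturation-preservation in Kostant--Sekiguchi, conjugacy of JM triples, the restricted-multiplicity tables) are invoked rather than proved, which is appropriate for standard facts but should be accompanied by precise references in a final write-up.
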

\begin{rem}
Fact \ref{non-exist} also follows from 
\cite[Proposition 4.3 and Section 10.2]{Tor97} 
by using the Kostant--Sekiguchi correspondence. 
\end{rem}

We next consider the structure of the algebra $(U(\mathfrak{g}_{\C})/J_0)^K$ of $K$-invariants.
\begin{lem}[{\cite[Lemma 4.11]{GS05}}]\label{K-inv}
Let $\Omega_{\mathfrak{k}_{\C}}$ be the Casimir element in $U(\mathfrak{k}_{\C})$. 
\begin{enumerate}
\item\label{K-inv1}
Suppose that $\mathfrak{g}_{\C}$ is simple. 
\begin{enumerate}
\item If $\mathcal{O}_{\min}\cap\mathfrak{p}_{\C}=\emptyset$, then $(U(\mathfrak{g}_{\C})/J_0)^K$ is equal to $\C$. 
\item
If $\mathcal{O}_{\min}\cap\mathfrak{p}_{\C}\neq\emptyset$, then $(U(\mathfrak{g}_{\C})/J_0)^K$ is the polynomial algebra in
\begin{align*}
T:=\begin{cases}
Z 
& \text{if $G/K$ is Hermitian}\\
\Omega_{\mathfrak{k}_{\C}}
& \text{if $G/K$ is non-Hermitian.}
\end{cases}
\end{align*}
Here $Z$ denotes any nonzero element in the center of $\mathfrak{k}_{\C}$.
\end{enumerate}
\item\label{K-inv2}
Suppose that $\mathfrak{g}_{\C}$ is not simple. Let $\diag\mathfrak{g}$ be the diagonal subalgebra of $\mathfrak{g}\oplus\mathfrak{g}$. 
Then the algebra $A:=\left(U(\mathfrak{g})\otimes U(\mathfrak{g})/(J_0\otimes U(\mathfrak{g})+U(\mathfrak{g})\otimes J_0)\right)^{\diag\mathfrak{g}}$ of $\diag\mathfrak{g}$-invariants is the polynomial algebra in the Casimir element $\Omega_{\diag\mathfrak{g}}$ for $\diag\mathfrak{g}$. 
\end{enumerate}
\end{lem}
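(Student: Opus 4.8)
The plan is to descend to the associated graded ring and reduce every assertion to counting the trivial $K$-type in a family of irreducible $\mathfrak{g}_{\C}$-modules, a count that is then controlled by the Cartan--Helgason theorem for the complex symmetric pair $(\mathfrak{g}_{\C},\mathfrak{k}_{\C})$.

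First I would use the defining property of the Joseph ideal: with respect to the standard filtration of $U(\mathfrak{g}_{\C})$ one has a $K$-equivariant isomorphism $\gr\bigl(U(\mathfrak{g}_{\C})/J_0\bigr)\cong\C[\overline{\mathcal{O}_{\min}}]$ (this uses complete primeness together with the normality of the minimal orbit closure), and by the classical description of the minimal orbit this coordinate ring is the multiplicity-free graded module $\bigoplus_{n\ge0}V_{n\psi}$, where $V_{n\psi}$ denotes the irreducible $\mathfrak{g}_{\C}$-module of highest weight $n\psi$ placed in degree $n$. Because $\mathfrak{k}_{\C}$ is reductive and acts locally finitely, the functor of $\mathfrak{k}_{\C}$-invariants (which equals $K$-invariants since $K$ is connected) is exact and commutes with $\gr$, so that
\[
\gr\bigl((U(\mathfrak{g}_{\C})/J_0)^K\bigr)=\bigoplus_{n\ge0}(V_{n\psi})^K .
\]
Hence it suffices to compute the graded dimensions $\dim(V_{n\psi})^K$ and to exhibit invariants of $U(\mathfrak{g}_{\C})/J_0$ whose symbols realize them.

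Next I would invoke Cartan--Helgason for $(\mathfrak{g}_{\C},\mathfrak{k}_{\C})$, whose involution is the complexification of $\theta$. It supplies the multiplicity bound $\dim(V_{n\psi})^K\le1$ and the necessary vanishing condition $n\psi|_{\mathfrak{t}'}=0$. In case (a), where $\mathcal{O}_{\min}\cap\mathfrak{p}_{\C}=\emptyset$ is equivalent to $\psi|_{\mathfrak{t}'}\ne0$ by Fact \ref{non-exist}(ii), this forces $(V_{n\psi})^K=0$ for all $n\ge1$, so $\gr\bigl((U(\mathfrak{g}_{\C})/J_0)^K\bigr)=\C$ and therefore $(U(\mathfrak{g}_{\C})/J_0)^K=\C$. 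In case (b) we have $\psi|_{\mathfrak{t}'}=0$, so the vanishing condition is automatic and the surviving trivial types are governed by the integrality part of Cartan--Helgason for the restricted root system $\Sigma(\mathfrak{g},\mathfrak{a})$. Here the degree-one piece is $(V_\psi)^K=(\mathfrak{g}_{\C})^K=\mathfrak{z}(\mathfrak{k}_{\C})$ (using $(\mathfrak{p}_{\C})^{\mathfrak{k}_{\C}}=0$ for simple $\mathfrak{g}$), which is one-dimensional, spanned by $Z$, exactly when $G/K$ is Hermitian and is zero otherwise. I expect the parity statement to be the main obstacle: one must show that $n\psi$ is a spherical weight for every $n$ in the Hermitian case, but only for even $n$ in the non-Hermitian case. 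The natural route is to identify $\psi|_{\mathfrak{a}}$ as the highest restricted root and compare it with the fundamental spherical weights; the decisive point is whether $\psi|_{\mathfrak{a}}$ itself lies in the monoid of spherical weights or only twice it does, which I would settle either uniformly or by inspecting the restricted root systems of the relevant real forms.

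Granting this count---$\dim(V_{n\psi})^K=1$ for every $n$ when $G/K$ is Hermitian, and $\dim(V_{n\psi})^K=1$ for even $n$ and $0$ for odd $n$ when $G/K$ is non-Hermitian---the proof concludes by matching Hilbert series. Since $J_0$ is completely prime, $U(\mathfrak{g}_{\C})/J_0$ is an integral domain; consequently the symbol of a power of a $K$-invariant equals the power of its symbol, and the $K$-invariant $T$ (equal to $Z$ of degree one in the Hermitian case and to $\Omega_{\mathfrak{k}_{\C}}$ of degree two in the non-Hermitian case) generates a polynomial subalgebra $\C[T]\subseteq(U(\mathfrak{g}_{\C})/J_0)^K$. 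Its Hilbert series is $\sum_{k\ge0}t^{k}$, respectively $\sum_{k\ge0}t^{2k}$, which agrees term by term with the one computed above; as $\mathfrak{k}_{\C}$-invariants commute with $\gr$, the inclusion is an equality, proving part (1). For part (2), with $\mathfrak{g}_{\C}\cong\mathfrak{g}\oplus\mathfrak{g}$ and $\mathfrak{k}_{\C}=\diag\mathfrak{g}$, the same reduction gives $\gr A\cong\bigl(\C[\overline{\mathcal{O}_{\min}}]\otimes\C[\overline{\mathcal{O}_{\min}}]\bigr)^{\diag\mathfrak{g}}=\bigoplus_{m,n}\Hom_{\mathfrak{g}}(V_{m\psi}^{\vee},V_{n\psi})$. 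Because the adjoint-type modules are self-dual, $-w_0\psi=\psi$ and $V_{n\psi}^{\vee}\cong V_{n\psi}$, so this space is one-dimensional on the diagonal $m=n$ and zero off it, giving Hilbert series $\sum_{m\ge0}t^{2m}$. The Casimir $\Omega_{\diag\mathfrak{g}}$ has symbol of degree two and is a non-zero-divisor by complete primeness, hence generates $A$ freely, so $A=\C[\Omega_{\diag\mathfrak{g}}]$.
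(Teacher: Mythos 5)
Your blueprint coincides with the paper's up to the decisive point: pass to the associated graded algebra, use Garfinkle's decomposition $S(\mathfrak{g}_{\C})/\gr J_0\cong\bigoplus_{n\ge0}W(n\psi)$ with $\gr J_0$ the prime ideal of $\overline{\mathcal{O}_{\min}}$, apply the Cartan--Helgason theorem to get multiplicity at most one and a submonoid $n_0\N$ of spherical multiples, and match Hilbert series using that the quotient is a domain. Your handling of the case $\mathcal{O}_{\min}\cap\mathfrak{p}_{\C}=\emptyset$, of the Hermitian case (where $\gr J_0$ has no degree-one component, so the symbol of $Z$ visibly survives, $(W(\psi))^K=\mathfrak{z}(\mathfrak{k}_{\C})\neq0$ forces $n_0=1$), and of part (2) is essentially correct; in part (2) you should still justify that the image of the symbol of $\Omega_{\diag\mathfrak{g}}$ is not a scalar --- the terms $\gr_2\Omega_{\mathfrak{g}}\otimes1$ and $1\otimes\gr_2\Omega_{\mathfrak{g}}$ die modulo the ideal since the Killing quadratic form vanishes on the nilpotent cone, while the cross term $\sum_j(Y_j\otimes Y^j+Y^j\otimes Y_j)$ survives because the ideal has no bidegree-$(1,1)$ component; the paper gets the same conclusion from $J_0$ having an infinitesimal character.

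The genuine gap is in the non-Hermitian half of (1b), and you flag it yourself (``the main obstacle'') without closing it. Two things are missing, and they are really one. First, nothing in your argument bounds $n_0$ from above: your degree-one computation rules out $n_0=1$, but a priori $n_0$ could exceed $2$, and you only sketch possible routes (uniform, or case-by-case over restricted root systems) without executing either. Second, even granting $n_0=2$, your Hilbert-series step silently assumes that $\gr_2\Omega_{\mathfrak{k}_{\C}}$ is nonzero modulo $\gr J_0$; the domain property only propagates nonvanishing to powers, it does not supply it in degree two. Survival is not automatic here: $\gr_2\Omega_{\mathfrak{g}_{\C}}$ \emph{does} lie in $\gr J_0$, so a natural degree-two invariant can die, and if $\gr_2\Omega_{\mathfrak{k}_{\C}}\in\gr J_0$ then $\Omega_{\mathfrak{k}_{\C}}$ would be congruent mod $J_0$ to a filtration-$\le1$ invariant, hence to a scalar, and the lemma as stated (polynomial algebra in $\Omega_{\mathfrak{k}_{\C}}$ specifically) would fail even if $(U(\mathfrak{g}_{\C})/J_0)^K$ were abstractly a polynomial ring on some other generator. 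The paper closes both holes in one uniform stroke that your proposal lacks: assume $\gr_2\Omega_{\mathfrak{k}_{\C}}\in\gr J_0$ and evaluate at a real highest root vector $E_{\psi}\in\mathcal{O}_{\min}\cap\mathfrak{g}$, which exists because $\psi$ is a real root by Fact \ref{non-exist}; writing $\gr_2\Omega_{\mathfrak{k}_{\C}}=-\sum_{i}X_i^2$ for a real orthonormal basis of $\mathfrak{k}$, vanishing at the real point $E_{\psi}$ forces all Killing pairings of $E_{\psi}$ with $\mathfrak{k}$ to vanish, i.e. $E_{\psi}\in\mathfrak{p}$, making $\psi$ simultaneously real and imaginary, hence zero --- a contradiction. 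This produces a nonzero invariant of degree two in the quotient, so $n_0\in\{1,2\}$, and simultaneously certifies $\Omega_{\mathfrak{k}_{\C}}$ as the generator; your case-by-case fallback could in principle prove $n_0=2$ but would still leave the survival of $\gr_2\Omega_{\mathfrak{k}_{\C}}$ itself unaddressed.
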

\begin{proof}
We first assume that $\mathfrak{g}_{\C}$ is simple. 
Let $U_n(\mathfrak{g}_{\C})$ denote the subspace of $U(\mathfrak{g}_{\C})$ spanned by the products of $n$ or less elements of $\mathfrak{g}_{\C}$. 
We will write $S^n(\mathfrak{g}_{\C})$ for the $n$-th symmetric power of $\mathfrak{g}_{\C}$, and $S(\mathfrak{g}_{\C})$ for the symmetric algebra. 
Let $\gr_n$ denote the natural isomorphism from $U_n(\mathfrak{g}_{\C})/U_{n-1}(\mathfrak{g}_{\C})$ to $S^n(\mathfrak{g}_{\C})$ as $\mathfrak{g}_{\C}$-modules. 
We see $\gr_{m+n}(XY)=\gr_m(X)\gr_n(Y)$ for all $X\in U_m(\mathfrak{g}_{\C}), Y\in U_n(\mathfrak{g}_{\C})$. 
Since $\gr_n$ induces the isomorphism 
\begin{align*}
U_n(\mathfrak{g}_{\C})/(U_{n-1}(\mathfrak{g}_{\C})+(J_0\cap U_n(\mathfrak{g}_{\C})))\cong S^n(\mathfrak{g}_{\C})/(\gr J_0\cap S^n(\mathfrak{g}_{\C})),
\end{align*}
it suffices to prove 
\begin{align}\label{sym-version}
(S(\mathfrak{g}_{\C})/\gr J_0)^K=
\begin{cases}
\C&\text{if $\mathcal{O}_{\min}\cap\mathfrak{p}_{\C}=\emptyset$}\\
\C[\gr_1Z]&\text{if $\mathcal{O}_{\min}\cap\mathfrak{p}_{\C}\neq\emptyset$ and $G/K$ is Hermitian}\\
\C[\gr_2\Omega_{\mathfrak{k}_{\C}}]&\text{otherwise.}
\end{cases}
\end{align}

D.~Garfinkle \cite[Chapter I\hspace{-.1em}I\hspace{-.1em}I and V]{Gar82} proved 
that the associated graded ideal $\gr J_0$ of the Joseph ideal is the ideal defined by the minimal nilpotent orbit $\mathcal{O}_{\min}$, which is prime, 
and that the $\mathfrak{g}_{\C}$-module $S^n(\mathfrak{g}_{\C})$ is the direct sum of $\gr J_0\cap S^n(\mathfrak{g}_{\C})$ and the irreducible $\mathfrak{g}_{\C}$-module $W(n\psi)$ with highest weight $n\psi$. 
In particular, $S(\mathfrak{g}_{\C})/\gr J_0 \cong \bigoplus_{n\ge 0}W(n\psi)$ as $\mathfrak{g}_{\C}$-modules. 

Since we consider only the action of the adjoint group, we may assume that $G$ has a complexification. 
The Cartan--Helgason theorem (cf. \cite[Theorem 8.49]{Kna02}) 
asserts that the space of $K$-fixed vectors $W(n\psi)^K$ is at most one-dimensional, 
and that $W(n\psi)^K$ is nonzero 
if and only if $\psi$ is a real root and $(n\psi|_{\mathfrak{a}}, \alpha)/(\alpha, \alpha)\in \Z$ for every restricted root $\alpha$, where $(\cdot, \cdot)$ denotes the inner product on $\mathfrak{a}^{\vee}$ given by the Killing form of $\mathfrak{g}$. 
Hence there exists a unique nonnegative integer $n_0$ such that $W(n\psi)^K$ is nonzero 
if and only if $n\in n_0\N$. 

When $\mathcal{O}_{\min}\cap\mathfrak{p}_{\C}$ is empty, Fact \ref{non-exist} asserts that $\psi$ is not a real root. Therefore $n_0$ is zero and $(S(\mathfrak{g}_{\C})/\gr J_0)^K=\C$. 

From now, we will assume $\mathcal{O}_{\min}\cap\mathfrak{p}_{\C}\neq \emptyset$. 
Our next claim is $\gr_2\Omega_{\mathfrak{k}_{\C}}\not\in \gr J_0$. 
Conversely, suppose $\gr_2\Omega_{\mathfrak{k}_{\C}}\in \gr J_0$. 
Since $\psi$ is a real root by Fact \ref{non-exist}, we can take a highest root vector $E_{\psi}$ in $\mathfrak{g}$, which belongs to $\mathcal{O}_{\min}$. 
The above result of D.~Garfinkle asserts that 
$\gr_2\Omega_{\mathfrak{k}_{\C}}$ is zero at $E_{\psi}$ via the Killing form. 
Let $\{X_i\}_{1\le i\le \dim{\mathfrak{k}}}$ be an orthonormal basis of $\mathfrak{k}$ with respect to the negative of the Killing form. 
By $\gr_2\Omega_{\mathfrak{k}_{\C}}=-\sum_{i=1}^{\dim\mathfrak{k}}X_i^2$, we see $E_{\psi}\in \mathfrak{p}$ and $\psi$ is an imaginary root, which contradicts $\psi\neq 0$.

Therefore $\gr_2\Omega_{\mathfrak{k}_{\C}}$ is nonzero in $(S(\mathfrak{g}_{\C})/\gr J_0)^K$ and $n_0$ is one or two. 
We see that $n_0$ is one exactly when $W(\psi)\cong \mathfrak{g}_{\C}$ has a nontrivial $K$-fixed vector, or $G/K$ is Hermitian. 
Let $T'$ be any nonzero central element in $\mathfrak{k}_{\C}$ when $G/K$ is Hermitian, and $\gr_2\Omega_{\mathfrak{k}_{\C}}$ when $G/K$ is non-Hermitian. 
Since $\gr J_0$ is prime, the subalgebra of $(S(\mathfrak{g}_{\C}/\gr J_0))^K$ generated by $T'$ is the polynomial algebra in $T'$, which proves (\ref{sym-version}).

We next suppose that $\mathfrak{g}_{\C}$ is not simple. 
Since the irreducible $\mathfrak{g}$-module $W(n\psi)$ with highest weight $n\psi$ is self-dual, we have 
\begin{align*}
A&\cong (U(\mathfrak{g})/J_0\otimes U(\mathfrak{g})/J_0)^{\diag\mathfrak{g}}
\cong (S(\mathfrak{g})/\gr J_0\otimes S(\mathfrak{g})/\gr J_0)^{\diag\mathfrak{g}}\\
&\cong \bigoplus_{m,n\ge 0}(W(m\psi)\otimes W(n\psi))^{\diag\mathfrak{g}}
= \bigoplus_{n\ge 0}(W(n\psi)\otimes W(n\psi))^{\diag\mathfrak{g}}
\end{align*}
and $(W(n\psi)\otimes W(n\psi))^{\diag\mathfrak{g}}$ is one-dimensional for all $n\ge 0$. 
Then $(S(\mathfrak{g})/\gr J_0\otimes S(\mathfrak{g})/\gr J_0)^{\diag\mathfrak{g}}$ admits a structure of graded algebra by letting $(W(n\psi)\otimes W(n\psi))^{\diag\mathfrak{g}}$ the space of homogeneous vectors of degree $n$. 

Let $\{Y_j\}_{1\le j\le\dim{\mathfrak{g}}}$ be an basis of $\mathfrak{g}$, $\{Y^j\}_{1\le j\le\dim{\mathfrak{g}}}$ the dual basis with respect to the Killing form, and $\Omega_{\mathfrak{g}}$ the Casimir element of $\mathfrak{g}$. 
Then $\Omega_{\diag\mathfrak{g}}$ is written as $\Omega_{\mathfrak{g}}\otimes 1 +1\otimes \Omega_{\mathfrak{g}}+\sum_j(Y_j\otimes Y^j + Y^j\otimes Y_j)$. 
As $J_0$ has an infinitesimal character, 
the element in $(S(\mathfrak{g})/\gr J_0\otimes S(\mathfrak{g})/\gr J_0)^{\diag\mathfrak{g}}$ corresponding to $\Omega_{\diag\mathfrak{g}}$ 
belongs to $\C+(\mathfrak{g}\otimes \mathfrak{g})^{\diag\mathfrak{g}}$ 
and is not a scalar. 

Since $S(\mathfrak{g})/\gr J_0$ is an integral domain, so is $S(\mathfrak{g})/\gr J_0\otimes S(\mathfrak{g})/\gr J_0$. 
Therefore the subalgebra generated by $\Omega_{\diag\mathfrak{g}}$ is the polynomial algebra in $\Omega_{\diag\mathfrak{g}}$. 
The above argument shows $(S(\mathfrak{g})/\gr J_0)^K=\C[\Omega_{\diag\mathfrak{g}}]$, which completes the proof of (\ref{K-inv2}). 
\end{proof}
\begin{proof}[Proof of Proposition \ref{criterion}]
If $\mathfrak{g}$ is a complex Lie algebra, we can take an isomorphism $\mathfrak{g}_{\C}\cong \mathfrak{g}\oplus\mathfrak{g}$ which gives the one between $\mathfrak{k}_{\C}$ and $\diag\mathfrak{g}$. 
Under the induced isomorphism $U(\mathfrak{g}_{\C})\cong U(\mathfrak{g})\otimes U(\mathfrak{g})$, the Casimir element $\Omega_{\mathfrak{k}_{\C}}$ corresponds to $\Omega_{\diag\mathfrak{g}}$. 

Since any central element of $\mathfrak{k}$ and the Casimir element $\Omega_{\mathfrak{k}_{\C}}$ act on each $K$-isotypic component of a minimal $(\mathfrak{g}_{\C},K)$-module by a scalar multiple, the same assertion holds for any element of $U(\mathfrak{g}_{\C})^K$ by Lemma \ref{K-inv}. 
This completes the proof by the $K$-semisimplicity of $(\mathfrak{g}_{\C},K)$-modules and the following fact. 
\end{proof}
\begin{fact}[{\cite[Theorem 5.5]{LM73}}]\label{LM}
Let $\tau$ be an irreducible finite-dimensional representation of $K$. 
\begin{enumerate}
\item Let $\pi$ be an irreducible $(\mathfrak{g}_{\C},K)$-module. 
	If the $U(\mathfrak{g}_{\C})^K$-module $\Hom_K(\tau,\pi)$ is nonzero, 
	then it is irreducible. 
\item Let $\pi_1, \pi_2$ be irreducible $(\mathfrak{g}_{\C},K)$-modules. 
	If the $U(\mathfrak{g}_{\C})^K$-module $\Hom_K(\tau,\pi_1)$ is nonzero and 
	isomorphic to $\Hom_K(\tau,\pi_2)$, 
	then $\pi_1$ and $\pi_2$ are isomorphic as $(\mathfrak{g}_{\C},K)$-modules. 
\end{enumerate}
\end{fact}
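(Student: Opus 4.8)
The plan is to deduce Fact \ref{LM} from the general ``idempotent reduction'' principle for associative algebras, bridged to the invariant subalgebra $U(\g_{\C})^K$. First I would realise $(\g_{\C},K)$-modules as modules over the Hecke algebra $\mathcal{R}=\mathcal{R}(\g_{\C},K)$ of the pair, inside which the $K$-type $\tau$ determines an idempotent $e_\tau$ projecting onto the $\tau$-isotypic component. For a $(\g_{\C},K)$-module $\pi$ one then has $e_\tau\pi=\pi_\tau$, and $\pi_\tau\cong\tau\otimes\Hom_K(\tau,\pi)$ as $K$-modules. The invariant algebra $A:=U(\g_{\C})^K$ commutes with the $K$-action, since $k(uv)=(\Ad(k)u)(kv)=u(kv)$ for $u\in A$; hence it preserves $\pi_\tau$ and, by Schur's lemma, acts through the second tensor factor as $1_\tau\otimes\phi$, where $\phi\colon A\to\mathrm{End}(\Hom_K(\tau,\pi))$ is exactly the module structure appearing in the statement.

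The engine of the proof is the elementary lemma that for any idempotent $e$ in a ring $\mathcal{R}$: if $V$ is an irreducible $\mathcal{R}$-module with $eV\neq0$, then $eV$ is an irreducible $e\mathcal{R}e$-module, and $V$ is recovered from $eV$ as the unique irreducible quotient of $\mathcal{R}e\otimes_{e\mathcal{R}e}eV$ on which $e$ acts nontrivially. The irreducibility is immediate: for $0\neq W\subseteq eV$ an $e\mathcal{R}e$-submodule, $\mathcal{R}W=V$ by irreducibility, whence $eV=e\mathcal{R}W=(e\mathcal{R}e)W=W$ using $W=eW$. The recovery follows because the evaluation map $\mathcal{R}e\otimes_{e\mathcal{R}e}eV\to V$ is surjective and becomes an isomorphism after applying $e$, so its kernel has vanishing $e$-part and $V$ is the only irreducible quotient on which $e$ survives. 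Applying this with $e=e_\tau$ yields both conclusions of Fact \ref{LM}, but phrased for the $\tau$-spherical Hecke algebra $e_\tau\mathcal{R}e_\tau$ in place of $A$.

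It therefore remains to transfer the statements from $e_\tau\mathcal{R}e_\tau$-modules on $\pi_\tau$ to $A$-modules on $\Hom_K(\tau,\pi)$, and this is the step I expect to be the main obstacle. The point is to identify the two submodule lattices: an $e_\tau\mathcal{R}e_\tau$-submodule of $\pi_\tau$ is automatically stable under the $K$-part $e_\tau\mathcal{R}(K)e_\tau=\mathrm{End}(\tau)$, hence has the form $\tau\otimes N$, and I must show the admissible $N$ are precisely the $A$-submodules of $\Hom_K(\tau,\pi)$. One inclusion is formal, since $e_\tau\mathcal{R}e_\tau$ contains both $\mathrm{End}(\tau)\otimes 1$ and the image $1_\tau\otimes\phi(A)$ of $A$; the hard inclusion is that the ``$U(\g_{\C})$-part'' acts no more freely than $A$, namely $e_\tau U(\g_{\C})e_\tau\subseteq\mathrm{End}(\tau)\otimes\phi(A)$ as operators on $\pi_\tau$. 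I would prove this by a matrix-coefficient averaging argument: writing $e_\tau$ as an integral of $K$-translates and commuting $K$ past $U(\g_{\C})$ via $ku=(\Ad(k)u)k$, the spherical element $e_\tau u e_\tau$ expands in a fixed basis of $\mathrm{End}(\tau)$ with coefficients obtained by integrating $\Ad(k)u$ against matrix coefficients of $\tau$, and these coefficients lie in $U(\g_{\C})^K$.

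Granting this bridge, the irreducibility lemma transports directly to part (1), and the recovery half transports to part (2): the module $\mathcal{R}e_\tau\otimes_{e_\tau\mathcal{R}e_\tau}\pi_\tau$ and its distinguished irreducible quotient $\pi$ depend only on $\Hom_K(\tau,\pi)$ as an $A$-module, so an $A$-isomorphism $\Hom_K(\tau,\pi_1)\cong\Hom_K(\tau,\pi_2)$ forces $\pi_1\cong\pi_2$. The one subtlety to check is that all idempotent manipulations remain valid when $K$ is noncompact, as in the Hermitian simply connected case of Definition \ref{df:g,K}: there $\mathcal{R}(K)=\bigoplus_\sigma\mathrm{End}(\sigma)$ is still built from the finite-dimensional $K$-types that occur, the $K$-action on every module is locally finite and semisimple, and $e_\tau$ remains a well-defined idempotent, so the argument goes through unchanged.
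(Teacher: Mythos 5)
You should first note that the paper does not prove this statement at all: it is quoted as a Fact from Lepowsky--McCollum \cite[Theorem 5.5]{LM73}, so the relevant comparison is with the classical argument there. Your overall architecture is the standard one and is fine as far as it goes: realizing $(\mathfrak{g}_{\C},K)$-modules as $\mathcal{R}(\mathfrak{g}_{\C},K)$-modules, the idempotent lemma that $eV$ is irreducible over $e\mathcal{R}e$ and that $V$ is recovered as the unique irreducible quotient of $\mathcal{R}e\otimes_{e\mathcal{R}e}eV$ on which $e$ survives, and the identification $\pi_\tau\cong\tau\otimes\Hom_K(\tau,\pi)$ with $U(\mathfrak{g}_{\C})^K$ acting through the second factor are all correct. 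But you correctly located the main obstacle and then did not overcome it: the bridging inclusion $e_\tau U(\mathfrak{g}_{\C})e_\tau\subseteq\mathrm{End}(\tau)\otimes\phi(A)$ is the entire content of the Fact, and your averaging argument for it is wrong. Writing $e_\tau=\dim\tau\int_K\overline{\chi_\tau(k)}\,k\,dk$ and commuting $k$ past $u$ gives $e_\tau u e_\tau=\dim\tau\int_K\overline{\chi_\tau(k)}\,(\Ad(k)u)\,k e_\tau\,dk$; expanding $\tau(k)$ in matrix coefficients produces coefficients of the form $\int_K\overline{\chi_\tau(k)}\,\tau_{ij}(k)\,\Ad(k)u\,dk$, and an average $\int_K f(k)\Ad(k)u\,dk$ is $\Ad(K)$-invariant only when $f$ is invariant under left translation, i.e.\ constant; even for class functions $f$ one only obtains the $K$-equivariant isotypic projections of $u$, which lie in $U(\mathfrak{g}_{\C})^K$ only for the trivial isotypic piece. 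So your coefficients do not lie in $U(\mathfrak{g}_{\C})^K$. The gap is not cosmetic: on an irreducible admissible $\pi$, Dixmier's Schur lemma plus Jacobson density show that $e_\tau\mathcal{R}e_\tau$ maps \emph{onto} $\mathrm{End}(\pi_\tau)=\mathrm{End}(\tau)\otimes\mathrm{End}(\Hom_K(\tau,\pi))$, so by Burnside your inclusion is logically equivalent to assertion (1) itself; as stated, your proof is circular at its key step. Part (2) needs strictly more still, namely a $\pi$-independent (algebra-level) generation statement for $e_\tau\mathcal{R}e_\tau$, since a decomposition of $e_\tau u e_\tau$ that varies with $\pi$ would not let an $A$-isomorphism of multiplicity spaces intertwine the $e\mathcal{R}e$-actions.

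The classical repair, which is essentially what Lepowsky--McCollum do, is to move the problem to the trivial $K$-type before averaging: replace $\pi$ by $\pi\otimes\tau^{\vee}$ and $\Hom_K(\tau,\pi)$ by the invariants $(\pi\otimes\tau^{\vee})^K$. There the bridging identity \emph{is} formal: if $E_0$ is the projection onto $K$-invariants and $P:U(\mathfrak{g}_{\C})\to U(\mathfrak{g}_{\C})^K$ is the projection along the nontrivial $\Ad(K)$-isotypic components (which exists purely algebraically, so it also handles your noncompact-$K$ worry, where integrals over $K$ do not literally converge), then $E_0(u\otimes 1)E_0=(P(u)\otimes 1)E_0$, because a vector transforming in a nontrivial $K$-type applied to an invariant vector has vanishing invariant projection. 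Irreducibility (1) then follows from Jacobson density applied to $\pi$: a nonzero element of $\Hom_K(\tau,\pi)$ is injective on $\tau$, so its tensor legs in $\tau^{\vee}$ span, and density lets $U(\mathfrak{g}_{\C})\otimes 1$ followed by $E_0$, i.e.\ $P(U(\mathfrak{g}_{\C}))=U(\mathfrak{g}_{\C})^K$, move any nonzero invariant vector onto all of $(\pi\otimes\tau^{\vee})^K$; the recovery argument (2) is then run in this twisted picture, where the action genuinely factors through $U(\mathfrak{g}_{\C})^K$ in a $\pi$-independent way. In short: your idempotent skeleton is the right machine, but the transfer from $e_\tau\mathcal{R}e_\tau$ to $U(\mathfrak{g}_{\C})^K$ must be done on the invariants of the $\tau^{\vee}$-twist, not on $\pi_\tau$ itself, and as written your proof has a genuine gap exactly there.
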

\begin{rem}\label{rem:non-exist}
Lemma \ref{K-inv} $(\ref{K-inv1})$ gives another proof of the well-known fact: when $\mathfrak{g}_{\C}$ is simple and $\mathcal{O}_{\min}\cap\mathfrak{p}_{\C}=\emptyset$, there exists no minimal $(\mathfrak{g}_{\C},K)$-module. 
If existed, the center $Z(\mathfrak{k}_{\C})$ of $U(\mathfrak{k}_{\C})$ would act by scalars by Lemma \ref{K-inv} $(\ref{K-inv1})$ and therefore only one $K$-type would occur. 
It contradicts the infinite-dimensionality of minimal $(\mathfrak{g}_{\C},K)$-modules because they are $K$-multiplicity free.
\end{rem}
\section{classification}\label{section:classification}
Let $G$ be a connected simply connected simple real Lie group not of type $A$, and $\mathfrak{g}, \mathfrak{k}, \mathfrak{p}, K, J_0$ as in Section $\ref{section:preliminaries}$. 
We recall that $K$ is not compact exactly when $G/K$ is Hermitian, and that $(\mathfrak{g}_{\C},K)$-modules in this paper are $K$-semisimple (see Definition $\ref{df:g,K}$). 
In this section, we give the classification of minimal $(\mathfrak{g}_{\C},K)$-modules. 

We fix a Cartan subalgebra $\mathfrak{t}$ of $\mathfrak{k}$. 
Let $\mathfrak{k}_{ss}:=[\mathfrak{k},\mathfrak{k}], \mathfrak{t}_{ss}:=\mathfrak{k}_{ss}\cap\mathfrak{t}$ and $\mathfrak{z}(\mathfrak{k})$ the center of $\mathfrak{k}$. 
We will regard the dual $\mathfrak{t}^{\vee}$ as $\mathfrak{z}(\mathfrak{k})^{\vee}+\mathfrak{t}_{ss}^{\vee}$ via the decomposition $\mathfrak{t}=\mathfrak{z}(\mathfrak{k})\oplus\mathfrak{t}_{ss}$, and write $\mu_{ss}$ for 
the semisimple part of $\mu\in\mathfrak{t}_{\C}^{\vee}$.
Choose a positive system $\Delta^+=\Delta^+(\mathfrak{k}_{ss}, \mathfrak{t}_{ss})$.
When $\mu\in\mathfrak{t}_{\C}^{\vee}$ and the restriction $\mu_{ss}$ is dominant integral, we will write $V(\mu)$ for the irreducible representation of $K$ with highest weight $\mu$. 

Let us state the main theorem in this article. 
\begin{thm}\label{main-thm}
Let $G$ be a connected simply connected simple real Lie group not of type $A$.
	\begin{enumerate}
	\item\label{K-type}
		For any minimal $(\mathfrak{g}_{\C},K)$-module, 
		there exist some $\mu_0\in\mathfrak{t}_{\C}^{\vee}$ and 
		a highest weight $\beta\in\mathfrak{t}_{\C}^{\vee}$ 
		of the $K$-module $\mathfrak{p}_{\C}$ such that 
		the restriction $(\mu_0)_{ss}$ is dominant integral and 
		the $K$-type decomposition is written as 
		\begin{equation}\label{main-K-type}
		\bigoplus_{n\ge 0}V(\mu_0+n\beta).
		\end{equation}
	\item \label{main-real}
		When $\mathfrak{g}_{\C}$ is simple, 
		all minimal $K$-types $V(\mu_0)$ of minimal $(\mathfrak{g}_{\C},K)$-modules are 
		given in Tables $\ref{classification1}$ and $\ref{classification2}$. 
		In particular, the action of $K$ on each minimal $(\mathfrak{g}_{\C},K)$-module descends to 
		a compact group covered by $K$, 
		and the following hold. 
		\begin{enumerate}
		\item When $\mathfrak{g}\cong \mathfrak{sp}(n,\R) (n\ge 2)$, 
			there are exactly four minimal $(\mathfrak{g}_{\C},K)$-modules. 
			They consist of the two irreducible components of the underlying $(\mathfrak{g}_{\C}, K)$-module of the Weil representation 
			and their complex conjugates. 
			They are highest or lowest weight modules. 
		\item When $G/K$ is Hermitian and $\mathfrak{g}$ is not of type $C$, 
			there are exactly two minimal $(\mathfrak{g}_{\C},K)$-modules. 
			They are highest or lowest weight modules 
			where the minimal $K$-types $V(\mu_0)$ are one-dimensional, 
			and they are the complex conjugates of each other. 
		\item When $\mathfrak{g}$ is compact or isomorphic to 
			$\mathfrak{so}(n,1) (n\ge 6), \mathfrak{sp}(p,q) (p,q\ge 1), \e_{6(-26)}, \f_{4(-20)}$ 
			or $\mathfrak{so}(p,q) (p, q\ge 4, p+q \text{ is odd})$, 
			there are no minimal $(\mathfrak{g}_{\C},K)$-modules. 
		\item When $\mathfrak{g}$ is isomorphic to 
			$\mathfrak{so}(p,q) (p, q \ge 3, p+q\ge 8, p+q \text{ is even}), 
			\mathfrak{so}(p,3) (p\ge 4 \text{ is even}), \e_{6(6)}, \e_{6(2)}, \e_{7(7)}, 
			\e_{7(-5)}, \e_{8(8)}, \e_{8(-24)}, \f_{4(4)}$ or $\g_{2(2)}$, 
			there is exactly one minimal $(\mathfrak{g}_{\C},K)$-module. 
		\end{enumerate}
	\item \label{main-cpx}
		When $\mathfrak{g}= \mathfrak{sp}(n,\C) (n\ge 2)$, 
		there are exactly two minimal $(\mathfrak{g}_{\C},K)$-modules. 
		They are the restrictions of the irreducible components of the underlying $(\mathfrak{g}_{\C}, K)$-module of 
		the Weil representation for $\mathfrak{sp}(2n,\R)$ to $\mathfrak{sp}(n,\C)$, where the imbedding is induced by forgetting the complex structure of an $n$-dimensional complex simplectic vector space. 
		One has nonzero $K$-fixed vectors 
		and is isomorphic to the Harish-Chandra bimodule $U(\mathfrak{g}_{\C})/J_0$ (see Remark $\ref{bimodule}(\ref{bimodule2})$). 
		The minimal $\Sp(n)$-type of the other is the $(2n)$-dimensional representation defining $\Sp(n)$. 
		
		When $\mathfrak{g}$ is a complex Lie algebra not of type $C$, 
		there is exactly one minimal $(\mathfrak{g}_{\C},K)$-module. 
		It is isomorphic to the Harish-Chandra bimodule $U(\mathfrak{g}_{\C})/J_0$. 
	\end{enumerate}
\end{thm}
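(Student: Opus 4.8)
The plan is to reduce the whole classification to determining which \emph{minimal $K$-types} can occur, and then to identify each survivor with a representation already constructed in the literature. By Proposition \ref{criterion} every minimal $(\mathfrak{g}_{\C},K)$-module is $K$-multiplicity free and is pinned down up to isomorphism by any single one of its $K$-types, so distinct minimal modules have distinct minimal $K$-types $V(\mu_0)$ and the number of isomorphism classes equals the number of admissible $\mu_0$. For part (\ref{K-type}) I would fix such a $\pi$ and a lowest $K$-type $V(\mu_0)$; as $\pi$ is irreducible it is generated over $U(\mathfrak{g}_{\C})$ by $V(\mu_0)$, and since $\mathfrak{k}_{\C}$ preserves $K$-isotypic components the action of $U(\mathfrak{p}_{\C})$ already generates $\pi$. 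Filtering by $\mathfrak{p}_{\C}$-degree yields a $K$-stable filtration whose associated graded is a cyclic graded module over $S(\mathfrak{p}_{\C})$ supported on $\overline{\mathcal{O}_{\min}}$ via $\gr J_0$. The $K$-multiplicity freeness of Proposition \ref{criterion}, together with minimality of the Gelfand--Kirillov dimension, forces each graded piece to be the single Cartan component $V(\mu_0+n\beta)$ of $S^n(\mathfrak{p}_{\C})\otimes V(\mu_0)$, where $\beta$ is the highest weight of $\mathfrak{p}_{\C}$; the vanishing of all other components is where the polynomial structure $(U(\mathfrak{g}_{\C})/J_0)^K=\C[T]$ of Lemma \ref{K-inv} enters, since every $K$-invariant, and in particular the Casimir of each simple ideal of $\mathfrak{k}_{\C}$, is forced to be a polynomial in $T$ modulo $J_0$, which translates into numerical identities on the highest-weight components leaving only the ray $\{\mu_0+n\beta\mid n\ge 0\}$. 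This is exactly the decomposition (\ref{main-K-type}).

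Part (\ref{main-real}) is then a finite check over the simple real forms with $\mathfrak{g}_{\C}$ simple. The non-existence rows split into two: when $\mathcal{O}_{\min}\cap\mathfrak{p}_{\C}=\emptyset$---by Fact \ref{non-exist} precisely the compact forms together with $\mathfrak{so}(n,1)$, $\mathfrak{sp}(p,q)$, $\e_{6(-26)}$ and $\f_{4(-20)}$---Remark \ref{rem:non-exist} already excludes any minimal module; for $\mathfrak{so}(p,q)$ with $p,q\ge 4$ and $p+q$ odd the Howe--Vogan theorem \cite[Theorem 2.13]{Vog81} forbids an irreducible module of the required Gelfand--Kirillov dimension. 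For each of the remaining forms one solves the numerical constraint of part (\ref{K-type}) for the dominant $\mu_0$, obtaining the entries of Tables \ref{classification1} and \ref{classification2}; inspecting these weights shows that the action factors through a compact quotient of $K$, a genuine cover being needed only in type $C$. Counting the solutions and matching them to known modules via Proposition \ref{criterion} gives the stated numbers: four for $\mathfrak{sp}(n,\R)$ (the two Weil components and their complex conjugates, all highest or lowest weight modules), two in the Hermitian non-$C$ cases (a conjugate pair whose minimal $K$-type is one-dimensional), and exactly one in the non-Hermitian cases with $\mathcal{O}_{\min}\cap\mathfrak{p}_{\C}\neq\emptyset$ and $p+q$ even.

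For part (\ref{main-cpx}) the algebra $\mathfrak{g}$ carries a complex structure, so $\mathfrak{g}_{\C}\cong\mathfrak{g}\oplus\mathfrak{g}$ with $\mathfrak{k}_{\C}=\diag\mathfrak{g}$, and Lemma \ref{K-inv}(\ref{K-inv2}) gives that the algebra of $\diag\mathfrak{g}$-invariants is $\C[\Omega_{\diag\mathfrak{g}}]$, so the string of part (\ref{K-type}) applies with $\beta=\psi$. The Harish--Chandra bimodule $U(\mathfrak{g})/J_0$ is always a minimal module carrying a $\diag\mathfrak{g}$-fixed vector by Remark \ref{bimodule}(\ref{bimodule2}), i.e.\ $\mu_0=0$; solving the constraint shows this is the only admissible $\mu_0$ unless $\mathfrak{g}$ is of type $C$, in which case one further solution---the defining $2n$-dimensional $\Sp(n)$-type---appears and is realized by restricting a Weil component of $\mathfrak{sp}(2n,\R)$ along the embedding $\mathfrak{sp}(n,\C)\hookrightarrow\mathfrak{sp}(2n,\R)$. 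This yields two minimal modules in type $C$ and exactly one otherwise.

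I expect the main obstacle to be part (\ref{main-real}): the passage from the abstract one-dimensionality of $(U(\mathfrak{g}_{\C})/J_0)^K$ to an explicit list of dominant $\mu_0$ for each real form, and the certification that every surviving $\mu_0$ is genuinely realized by a construction in the literature rather than merely formally admissible. Concentrated in Theorem \ref{same-K}, this case-by-case weight bookkeeping---together with the care demanded by the metaplectic exceptions in type $C$---is where essentially all the difficulty lies, whereas parts (\ref{K-type}) and (\ref{main-cpx}) are comparatively formal consequences of Proposition \ref{criterion} and Lemma \ref{K-inv}.
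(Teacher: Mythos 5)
Your overall skeleton matches the paper's: use Proposition \ref{criterion} to reduce the classification to showing that every minimal $(\mathfrak{g}_{\C},K)$-module shares a $K$-type with one already constructed in the literature, get non-existence from Fact \ref{non-exist} and Howe--Vogan, and draw existence from Braverman--Joseph, Brylinski--Kostant, Garfinkle and the Weil representation. But the mechanism you propose for part (\ref{K-type}) has a genuine gap, and it matters because you then feed part (\ref{K-type}) into the classification as an input. Your claim is that the $\mathfrak{p}_{\C}$-degree filtration, $K$-multiplicity freeness, minimal Gelfand--Kirillov dimension, and $(U(\mathfrak{g}_{\C})/J_0)^K=\C[T]$ together force each graded piece to be the single Cartan component $V(\mu_0+n\beta)$. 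This is precisely what does \emph{not} follow: under exactly these hypotheses Vogan's theorem (quoted in Remark \ref{Vog-K-type}(1)) yields only $\bigoplus_{1\le i\le l,\,n\in\N}V(\mu_i+n\beta)$ with \emph{finitely many} strings, and multiplicity-freeness is perfectly compatible with several strings having distinct $\mu_i$. The step you wave at as ``numerical identities on the highest-weight components leaving only the ray'' is the entire content of the problem, and your graded-module argument does not supply it. Indeed, in the paper part (\ref{K-type}) is a \emph{consequence} of the classification (once $\pi$ is shown isomorphic to a known module, the string structure is inherited from the literature), not a lemma proved abstractly beforehand.

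The paper's actual engine, which your sketch omits, runs as follows. First (Lemma \ref{line-inclusion}), since $\Ann\pi=J_0=\Ann\pi_0$ for a known module $\pi_0\cong\bigoplus_n V(\mu_0+n\beta)$, one has $Z(\mathfrak{k}_{\C})\cap\Ann\pi=Z(\mathfrak{k}_{\C})\cap\Ann\pi_0$, and via the Harish--Chandra isomorphism and Zariski density of $\N\beta$ in $\C\beta$, every $K$-type $V(\mu)$ of $\pi$ satisfies $\mu+\rho_{\mathfrak{k}_{ss}}\in W(\mu_0+\rho_{\mathfrak{k}_{ss}}+\R\beta)$. Note the essential anchoring to an \emph{existing} $\pi_0$: the line is located by a known module, not solved for abstractly. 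Second, one needs the Weyl-group analysis you skip entirely: Lemma \ref{two-lines} shows that among the $|W|$ translates of this line only the trivial one and $w_0=w_lw_{\beta_{ss},l}$ can contain infinitely many dominant weights; Proposition \ref{same-line} verifies case-by-case (Tables \ref{constants1}--\ref{constants2}) that in the non-Hermitian case these two lines coincide; and Lemma \ref{period} pins the lattice spacing on the line ($\Z\beta_{ss}/2$ exactly in the type $C$ cases --- the metaplectic doubling you mention --- and $\Z\beta_{ss}$ otherwise). Only then does finiteness of the set of dominant weights on the two lines \emph{not} occurring in known modules force $\pi$ to share a $K$-type with one of them. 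In the Hermitian case the two lines are genuinely distinct and are occupied by a known module and its complex conjugate; without Lemmas \ref{two-lines} and \ref{period} and Proposition \ref{same-line}, your argument cannot exclude a new module supported on the second line or in the finitely many gaps, so your ``solve the numerical constraint for the dominant $\mu_0$'' step in parts (\ref{main-real}) and (\ref{main-cpx}) is not yet backed by a proof.
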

	\begin{table}[ht]
	\begin{center}
	\begin{threeparttable}
	\caption{All minimal $(\mathfrak{g}_{\C},K)$-modules when $G/K$ is Hermitian}
	\label{classification1}
	\begin{tabular}{cccc}\toprule
		$\mathfrak{g}$&$K$&$\mathfrak{p}_{\C}=\mathfrak{p}_+\oplus\mathfrak{p}_-$&minimal $K$-type $V(\mu_0)$\\\hline\hline
		$\mathfrak{so}(p,2)$&$\Spin(p)\times\R$&$\C^{p}\boxtimes(\C_{1}\oplus \C_{-1})$
		&$\C\boxtimes\C_{\pm(p-2)/2}$\\
		$(p\ge 5)$&&&\\\hline
	    $\mathfrak{sp}(n,\R)$&$\SU(n)\times\R$&$S^2(\C^n)\boxtimes\C_{1}$
	    &$\C\boxtimes\C_{\pm n/4}$,\\
		$(n\ge 2)$&&$\oplus S^2(\wedge^{n-1}\C^n)\boxtimes\C_{-1}$
		&$\C^n\boxtimes\C_{(n+2)/4}$,\\
		&&&$\wedge^{n-1}\C^n\boxtimes\C_{-(n+2)/4}$\\\hline
		$\mathfrak{so}^*(2n)$&$\SU(n)\times\R$&
		$\wedge^2\C^n\boxtimes\C_{1}$&$\C\boxtimes\C_{\pm n/2}$\\
		$(n\ge 4)$&&$\oplus\wedge^{n-2}\C^n\boxtimes\C_{-1}$&\\\hline
		$\e_{6(-14)}$&$\Spin(10)\times\R$&
		$V(\omega_5)\boxtimes(\C_{1}\oplus\C_{-1})$\tnote{*a}
		&$\C\boxtimes\C_{\pm4}$\\\hline
		$\e_{7(-25)}$&$\E_6\times\R$&
		$V(\omega_1)\boxtimes\C_{1}$
		&$\C\boxtimes\C_{\pm6}$\\
		&&$\oplus V(\omega_6)\boxtimes\C_{-1}$\tnote{*b}&\\\bottomrule
	\end{tabular}
	\small
    \begin{tablenotes}
      \item[*] We follow the notation of Bourbaki \cite[pp.250--274]{Bou68} for the fundamental weights $\omega_i$ and the standard basis $e_i$. 
      \item[*]
      $\C^n$ denotes the $n$-dimensional representation defining one of $\SO(n)$ or $\SU(n)$, and $\C$ does the trivial representation.
	  \item[*] 
	  The subscripts of $\C$ denote the scalars by which some central element of $\mathfrak{k}_{\C}$ acts. 
      When a subscript on the column of minimal $K$-type is positive (resp. negative), 
      the corresponding minimal $(\mathfrak{g}_{\C},K)$-module has a nonzero $\mathfrak{p}_-$-null (resp. $\mathfrak{p}_+$-null) vector. 
      \item[*a] $\omega_5:=\frac{1}{2}\sum_{i=1}^5e_i$.
      \item[*b] 
      $\omega_1:=\frac{2}{3}(-e_6-e_7+e_8)$, $\omega_6:=e_5+\frac{1}{3}(-e_6-e_7+e_8)$. 
	\end{tablenotes}
	\end{threeparttable}
    \end{center}
    \end{table}
    \begin{table}[ht]
	\begin{center}
	\begin{threeparttable}
	\caption{All minimal $(\mathfrak{g}_{\C},K)$-modules when $\mathfrak{g}_{\C}$ is simple and $G/K$ is non-Hermitian}
	\label{classification2}
	\begin{tabular}{cccc}\toprule
		$\mathfrak{g}$&$K$&$\mathfrak{p}_{\C}$&minimal $K$-type $V(\mu_0)$\\\hline\hline
		$\mathfrak{so}(p,q)$&$\Spin(p)\times\Spin(q)$&$\C^p\boxtimes\C^q$&
		$\C\boxtimes S^{\frac{p-q}{2}}(\C^q)$\\
		\multicolumn{2}{l}{$(p\ge q\ge 3, p+q\ge 8 \text{ even})$}&&\\\hline
		$\mathfrak{so}(p,3)$&$\Spin(p)\times\SU(2)$&
		$\C^p\boxtimes S^2(\C^2)$&$\C\boxtimes S^{p-3}(\C^2)$\\
		$(p\ge 4 \text{ even})$&&&\\\hline
		$\f_{4(4)}$&$\Sp(3)\times\SU(2)$&
		$(\wedge^3\C^6)_0\boxtimes\C^2$\tnote{*a}&$\C\boxtimes S^1(\C^2)$\\\hline
		$\e_{6(2)}$&$\SU(6)\times\SU(2)$&
		$\wedge^3\C^6\boxtimes\C^2$&$\C\boxtimes S^2(\C^2)$\\\hline
		$\e_{7(-5)}$&$\Spin(12)\times\SU(2)$&
		$V(\omega_6)\boxtimes\C^2$\tnote{*b}&$\C\boxtimes S^4(\C^2)$\\\hline
		$\e_{8(-24)}$&$\E_7\times\SU(2)$&
		$V(\omega_7)\boxtimes \C^2\tnote{*c}$&$\C\boxtimes S^8(\C^2)$\\\hline
		$\g_{2(2)}$&$\SU(2)_{\text{short}}
		\times\SU(2)_{\text{long}}$\tnote{*d}
		&
		$S^3(\C^2)\boxtimes \C^2$&$S^2(\C^2)\boxtimes \C$\\\hline
		$\e_{6(6)}$&$\Sp(4)$&$(\wedge^4\C^8)_0$\tnote{*a}&$\C$\\\hline
		$\e_{7(7)}$&$\SU(8)$&$\wedge^4\C^8$&$\C$\\\hline
		$\e_{8(8)}$&$\Spin(16)$&$V(\omega_8)$\tnote{*e}&$\C$\\\bottomrule
	\end{tabular}
	\small
	\begin{tablenotes}
	  \item[*a] $(\bigwedge^3\C^6)_0, (\bigwedge^4\C^8)_0$ denote 
	  the irreducible $\SU(6),\Sp(4)$-module with highest weight $\sum_{i=1}^{3}e_i, \sum_{i=1}^{4}e_i$ respectively. 
	  \item[*b] $\omega_6:=\frac{1}{2}\sum_{i=1}^{6}e_i$.
	  \item[*c] $\omega_7:=e_6+\frac{1}{2}(-e_7+e_8)$. 
	  \item[*d] The first (resp. second) factor of $\mathfrak{k}_{\C}$ has a short (resp. long) root vector with respect to a compact Cartan subalgebra of $\mathfrak{g}$. 
	  \item[*e] $\omega_8:=\frac{1}{2}\sum_{i=1}^{8}e_i$.
	\end{tablenotes}
	\end{threeparttable}
    \end{center}
    \end{table}

\begin{rem}
Let $K'$ be a connected Lie group covered by $K$. 
Minimal $(\mathfrak{g}_{\C},K')$-modules are exactly minimal $(\mathfrak{g}_{\C},K)$-modules 
where the $K$-action on the minimal $K$-type $V(\mu_0)$ descends to $K'$. 
\end{rem}

\begin{rem}\label{Vog-K-type}
\begin{enumerate}
\item 
D.~Vogan \cite[Corollary 3.6]{Vog81} proved that if the condition (i) of Fact \ref{non-exist} holds and the Gelfand--Kirillov dimension of infinite-dimensional irreducible $(\mathfrak{g}_{\C},K)$-module $\pi$ is half the complex dimension of the minimal nilpotent orbit, 
then the $K$-type decomposition of $\pi$ is written as 
\[\bigoplus_{1\le i\le l, n\in\N} V(\mu_i+n\beta)\] 
where $\beta$ is a highest weight  of $\mathfrak{p}_{\C}$ and $\mu_1, \cdots, \mu_l$ are elements of $\mathfrak{t}_{\C}^{\vee}$ such that $(\mu_1)_{ss}, \cdots, (\mu_l)_{ss}$ are dominant integral. 
It was also shown that there exists minimal $(\mathfrak{g}_{\C},K)$-module whose $K$-types are of the form (\ref{main-K-type}) for some $\mathfrak{g}$ \cite[Theorem 2.11]{Vog81}. 
\item By Theorem \ref{main-thm}, it follows that R.~Brylinski and B.~Kostant \cite[Theorem 4]{BK94M} constructed all minimal representations when $\mathfrak{g}_{\C}$ is simple and $G/K$ is non-Hermitian, 
and P.~Torasso \cite{Tor97} constructed all those when 
$\R$-$\rank(\mathfrak{g})\ge 3$. 
\end{enumerate}
\end{rem}

The rest of this section is devoted to the proof of Theorem \ref{main-thm}. 

The assertion about the non-existence of minimal $(\mathfrak{g}_{\C},K)$-modules follows from that in Section \ref{section:intro} and Fact \ref{non-exist} (we see $\mathfrak{so}(4,1)\cong\mathfrak{sp}(1,1), \mathfrak{so}(5,1)\cong\mathfrak{sl}(2,\Ha)$). 
We refer the reader to \cite{Fol89} for the construction and the $K$-type structure of the Weil representation, 
to \cite[Section 2.3.1]{HKM14} for minimality of the two irreducible components of the underlying $(\mathfrak{g}_{\C}, K)$-module of it, for example. 
Then it follows easily that the irreducible components for $\mathfrak{g}=\mathfrak{sp}(2n,\R)$ are still irreducible and minimal when they are restricted to the subalgebra $\mathfrak{sp}(n,\C)$. 
As for the existence of the other minimal $(\mathfrak{g}_{\C},K)$-modules whose $K$-types are given in Theorem \ref{main-thm},  
see A. Braverman and A. Joseph \cite[Proposition 6.6]{BJ98} 
(there is some typo: $\dim\mathbf{G}e_{\beta} -1$ should be $\dim\mathbf{G}e_{\beta} -2$) 
for the case $\mathfrak{g}_{\C}$ is simple, $G/K$ is Hermitian and $V(\mu_0)$ is one-dimensional, 
R. Brylinski and B. Kostant \cite{BK94M} when $G/K$ is non-Hermitian, and 
D. Garfinkle \cite[Chapter I\hspace{-.1em}I\hspace{-.1em}I and V]{Gar82} for the case $\mathfrak{g}$ is a complex Lie algebra and $V(\mu_0)$ is the trivial representation. 

By Proposition \ref{criterion}, minimal $(\mathfrak{g}_{\C},K)$-modules are $K$-multiplicity free and two minimal $(\mathfrak{g}_{\C},K)$-modules are isomorphic if and only if they have a common $K$-type. 
Therefore, it suffices to show the following. 

\begin{thm}\label{same-K}
For any minimal $(\mathfrak{g}_{\C},K)$-module $\pi$, there exists a minimal $(\mathfrak{g}_{\C},K)$-module which appears in Theorem $\ref{main-thm}$ $(\ref{main-real})$, $(\ref{main-cpx})$ and has a common $K$-type with $\pi$. 
\end{thm}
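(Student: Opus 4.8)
The strategy is to reduce the classification of minimal $(\mathfrak{g}_{\C},K)$-modules to a finite computation by exploiting Proposition~\ref{criterion}: since two minimal modules are isomorphic as soon as they share a single $K$-type, it suffices to show that every minimal $\pi$ necessarily contains some $K$-type already appearing in a known minimal module (those listed in Tables~\ref{classification1} and~\ref{classification2}). The central tool is the action of the center $Z(\mathfrak{k}_{\C})$ of $U(\mathfrak{k}_{\C})$. By Lemma~\ref{K-inv}, the algebra $(U(\mathfrak{g}_{\C})/J_0)^K$ of $K$-invariants is \emph{small}---a polynomial algebra in one generator $T$ (either a central element $Z$ of $\mathfrak{k}_{\C}$ in the Hermitian case, or the Casimir $\Omega_{\mathfrak{k}_{\C}}$ in the non-Hermitian case), or just $\C$ when $\mathcal{O}_{\min}\cap\mathfrak{p}_{\C}=\emptyset$. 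This rigidity forces strong constraints on which $K$-types can occur.

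The key step is to translate the equality in $(U(\mathfrak{g}_{\C})/J_0)^K$ into an explicit numerical relation on highest weights. Concretely, I would pick a suitable quadratic $K$-invariant built from $\mathfrak{p}_{\C}$---for instance the image in $U(\mathfrak{g}_{\C})/J_0$ of a Casimir-type element $\Omega_{\mathfrak{g}_{\C}}$ of the full algebra, whose value is fixed by the known infinitesimal character of $J_0$ (Table~\ref{infinitesimal character})---and compare it with $\Omega_{\mathfrak{k}_{\C}}$ via the decomposition $\Omega_{\mathfrak{g}_{\C}}=\Omega_{\mathfrak{k}_{\C}}+\Omega_{\mathfrak{p}_{\C}}$. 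On a $K$-type $V(\mu)$, the element $\Omega_{\mathfrak{k}_{\C}}$ acts by the scalar $\langle\mu+2\rho_{\mathfrak{k}},\mu\rangle$. Because the whole module is $K$-multiplicity free and, by Theorem~\ref{main-thm}(\ref{K-type}) together with Vogan's result quoted in Remark~\ref{Vog-K-type}, its $K$-types lie along finitely many strings $\mu_i+n\beta$, forcing the single invariant $T$ to act by a scalar pins down the admissible leading terms $\mu_i$. The goal is to show this system of equations has, for each $\mathfrak{g}$, exactly the solution set realized by the known modules, so the bottom $K$-type of any minimal $\pi$ must coincide with one in the tables.

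I would organize the computation by the trichotomy of Lemma~\ref{K-inv}: the case $\mathcal{O}_{\min}\cap\mathfrak{p}_{\C}=\emptyset$ is immediate (no module exists, by Remark~\ref{rem:non-exist}); the complex case $\mathfrak{g}_{\C}\cong\mathfrak{g}\oplus\mathfrak{g}$ reduces via $\Omega_{\mathfrak{k}_{\C}}\leftrightarrow\Omega_{\diag\mathfrak{g}}$ to recognizing the Harish--Chandra bimodule $U(\mathfrak{g})/J_0$ (Remark~\ref{bimodule}(\ref{bimodule2})) as the unique solution carrying a $K$-fixed vector; and the simple non-complex case splits further into Hermitian ($T=Z$) and non-Hermitian ($T=\Omega_{\mathfrak{k}_{\C}}$) subcases, handled group-by-group through the list in Tables~\ref{classification1} and~\ref{classification2}. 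In each entry one determines the finitely many candidate minimal $K$-types $V(\mu_0)$ compatible with the scalar by which $T$ must act, then checks that they already occur in a known module.

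The main obstacle is the case-by-case verification for the exceptional algebras $\e_{6},\e_{7},\e_{8},\f_{4},\g_{2}$ and for the various real forms of $\mathfrak{so}(p,q)$, where the combinatorics of $K$-types and the explicit evaluation of the Casimir scalars $\langle\mu+2\rho_{\mathfrak{k}},\mu\rangle$ are delicate: one must correctly identify $\mathfrak{p}_{\C}$ as a $K$-module, compute its highest weight $\beta$, and solve the resulting integrality-and-scalar constraints to rule out \emph{all} candidate leading weights except the tabulated ones. The parity conditions (e.g.\ $p+q$ even, $p\ge 4$ even) and the half-integral shifts appearing in the Hermitian cases require particular care, and the Cartan--Helgason constraint $(n\psi|_{\mathfrak{a}},\alpha)/(\alpha,\alpha)\in\Z$ used in Lemma~\ref{K-inv} must be matched against the explicit branching data for each real form. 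This is where the bulk of the labor lies, but conceptually each such case is a finite, bounded computation once the scalar relation from $(U(\mathfrak{g}_{\C})/J_0)^K$ is in hand.
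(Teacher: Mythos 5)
There is a genuine gap at the heart of your plan, even though the frame (Proposition \ref{criterion} plus the smallness of $(U(\mathfrak{g}_{\C})/J_0)^K$ from Lemma \ref{K-inv}, followed by case-by-case checks) is the right one. The mechanism you propose for extracting numerical constraints does not produce any: the identity $\Omega_{\mathfrak{g}_{\C}}=\Omega_{\mathfrak{k}_{\C}}+\Omega_{\mathfrak{p}_{\C}}$ together with the known scalar of $\Omega_{\mathfrak{g}_{\C}}$ merely \emph{defines} the scalar by which $\Omega_{\mathfrak{p}_{\C}}$ acts on each $K$-isotypic component as the difference of the other two; it places no restriction on the highest weight $\mu$. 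Likewise, ``forcing the single invariant $T$ to act by a scalar'' is vacuous: $T$ acts by a scalar on each isotypic component automatically (this is exactly how Proposition \ref{criterion} is proved), and these scalars necessarily vary from $K$-type to $K$-type since $\pi$ is infinite-dimensional and multiplicity free. A genuine constraint only arises from relations among \emph{several} elements of $Z(\mathfrak{k}_{\C})$ modulo $J_0$ --- in Gan--Savin's original setting, a linear relation between the Casimirs of the two simple factors of $\mathfrak{k}$ --- and in general from comparing annihilators with a known module. This is the paper's Lemma \ref{line-inclusion}: since $\Ann\pi=J_0=\Ann\pi_0$ for a known module $\pi_0\cong\bigoplus_{n\ge 0}V(\mu_0+n\beta)$, the Harish--Chandra isomorphism gives $\gamma(Z(\mathfrak{k}_{\C})\cap\Ann\pi)=\mathcal{I}(\mu_0+\rho_{\mathfrak{k}_{ss}}+\N\beta)^W=\mathcal{I}(\mu_0+\rho_{\mathfrak{k}_{ss}}+\C\beta)^W$ (the Zariski closure of $\N\beta$ being the line $\C\beta$), so every $K$-type of $\pi$ has $\mu+\rho_{\mathfrak{k}_{ss}}$ in the $W$-translates of a single line. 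Nothing in your proposal reaches this step.

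A second problem is circularity combined with a missing endgame. You invoke Theorem \ref{main-thm}(\ref{K-type}) to assert that the $K$-types of an arbitrary minimal $\pi$ lie along strings $\mu_i+n\beta$, but that statement is a consequence of the classification being proved; and the Vogan result quoted in Remark \ref{Vog-K-type} is stated there under condition (i) of Fact \ref{non-exist}, i.e.\ $\mathcal{O}_{\min}\cap\mathfrak{p}_{\C}=\emptyset$, which is the opposite of the case you need --- the paper's proof uses neither. Moreover, your aim of pinning down the \emph{minimal} $K$-type $V(\mu_0)$ of $\pi$ directly is not what a line constraint can deliver; the paper instead shows $\pi$ shares \emph{some} (not necessarily minimal) $K$-type with a known module, via a pigeonhole that your proposal lacks entirely: (i) among the $W$-translates of the line, only the identity translate and $w_0(\mu_0+\rho_{\mathfrak{k}_{ss}}+\R\beta)$ with $w_0=w_lw_{\beta_{ss},l}$ contain infinitely many dominant integral weights (Lemma \ref{two-lines}); (ii) in the non-Hermitian case these two lines coincide, verified case-by-case with the data of Tables \ref{constants1} and \ref{constants2} (Proposition \ref{same-line}); (iii) the integral weights on $\R\beta_{ss}$ form a lattice of period $\beta_{ss}$, or $\beta_{ss}/2$ in type $C$ (Lemma \ref{period}), so all but finitely many dominant lattice points on the admissible lines already occur as $K$-types of the tabulated modules; since $\pi$ is infinite-dimensional and $K$-multiplicity free, it contains a $K$-type of arbitrarily large norm on these lines and hence a common $K$-type with a known module. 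Without steps (i)--(iii) and this pigeonhole, the ``finite, bounded computation'' you describe in each case has no termination guarantee.
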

\begin{proof}
Let $\pi$ be a minimal $(\mathfrak{g}_{\C},K)$-module. 
The main idea of the proof is to narrow down possible $K$-types in $\pi$ by using the $K$-types in already known minimal $(\mathfrak{g}_{\C},K)$-modules.

We may assume that $\mathfrak{g}$ does not satisfy the condition (iii) in Fact \ref{non-exist}. 
Let $W$ be the Weyl group with respect to $(\mathfrak{k},\mathfrak{t})$ and 
\begin{center}
$\rho_{\mathfrak{k}_{ss}}$ half the sum of positive roots for $(\mathfrak{k}_{ss},\mathfrak{t}_{ss})$. 
\end{center}
Take a minimal $(\mathfrak{g}_{\C},K)$-module $\pi_0\cong \bigoplus_{n\ge 0} V(\mu_0+n\beta)$ appearing in Theorem \ref{main-thm}. 
Then the highest weights of $K$-types in $\pi$ must lie in the $W$-translates of a line:
\begin{lem}\label{line-inclusion}
Let $\mu\in\mathfrak{t}_{\C}^{\vee}$. Assume that the restriction $\mu_{ss}$ is dominant integral. 
If the $K$-type $V(\mu)$ occurs in $\pi$, then the weight $\mu+\rho_{\mathfrak{k}_{ss}}$ belongs to $W(\mu_0+\rho_{\mathfrak{k}_{ss}}+\R\beta)$.
\end{lem}
\begin{proof}
Let $Z(\mathfrak{k}_{\C})$ be the center of $U(\mathfrak{k}_{\C})$ and 
$$\gamma:Z(\mathfrak{k}_{\C})\xrightarrow{\cong} \mathcal{P}(\mathfrak{t}_{\C}^{\vee})^W$$ 
the Harish-Chandra isomorphism. 
Here $\mathcal{P}(\mathfrak{t}_{\C}^{\vee})$ denotes the algebra of polynomials on $\mathfrak{t}_{\C}^{\vee}$. 
The infinitesimal character of $V(\mu)$ is the $W$-orbit through $\mu+\rho_{\mathfrak{k}_{ss}}$. 
Since the Zariski closure of $\N\beta$ is the line $\C\beta$, 
we have
\begin{align*}
\gamma(Z(\mathfrak{k}_{\C})\cap \Ann\pi)
=\gamma(Z(\mathfrak{k}_{\C})\cap \Ann\pi_0)
=\mathcal{I}(\mu_0+\rho_{\mathfrak{k}_{ss}}+\N\beta)^W
=\mathcal{I}(\mu_0+\rho_{\mathfrak{k}_{ss}}+\C\beta)^W.
\end{align*}
Here $\mathcal{I}(\Lambda)$ denotes the ideal of $\mathcal{P}(\mathfrak{t}_{\C}^{\vee})$ defined by a subset $\Lambda$ of $\mathfrak{t}_{\C}^{\vee}$. 
Therefore, when the $K$-type $V(\mu)$ occurs in the $K$-type of $\pi$, we see
\begin{equation}\label{in-orbits}
\mathcal{I}(\mu_0+\rho_{\mathfrak{k}_{ss}}+\C\beta)^W\subset \mathcal{I}(\{\mu+\rho_{\mathfrak{k}_{ss}}\})^W.
\end{equation}

Since $\im\mathfrak{t}_{ss}^{\vee}$ is invariant under the action of $W$, it suffices to show $\mu+\rho_{\mathfrak{k}_{ss}}\in W(\mu_0+\rho_{\mathfrak{k}_{ss}}+\C\beta)$. 
On the contrary, we assume $\mu+\rho_{\mathfrak{k}_{ss}}\notin W(\mu_0+\rho_{\mathfrak{k}_{ss}}+\C\beta)$. 
Then we can take a polynomial $f$ on $\mathfrak{t}_{\C}^{\vee}$ such that $f(\mu_0+\rho_{\mathfrak{k}_{ss}}+\C\beta)=\{0\}$ and $f(w(\mu+\rho_{\mathfrak{k}_{ss}}))\neq 0$ for all $w\in W$. 
While the polynomial $\widetilde{f}(\lambda):=\prod_{w\in W}f(w\lambda)$ belongs to $\mathcal{I}(\mu_0+\rho_{\mathfrak{k}_{ss}}+\C\beta)^W$, 
it does not to $\mathcal{I}(\{\mu+\rho_{\mathfrak{k}_{ss}}\})^W$, which contradicts (\ref{in-orbits}). 
\end{proof}

We next consider which $W$-translates of the line $(\mu_0)_{ss}+\rho_{\mathfrak{k}_{ss}}+\R\beta_{ss}$ can contain infinite number of dominant integral weights for $(\mathfrak{k}_{ss}, \mathfrak{t}_{ss})$. 
Let us denote by $(\cdot,\cdot)$ the inner product  on $\im\mathfrak{t}_{ss}^{\vee}$ given by the negative of the Killing form of $\mathfrak{k}_{ss}$, 
by $L_+$ the set of dominant integral weights for $(\mathfrak{k}_{ss},\mathfrak{t}_{ss})$, and 
by $(\R\beta_{ss})^{\perp}$ the orthogonal complement of $\R\beta_{ss}$ in $\mathfrak{t}_{ss}$. 
Let 
\begin{align*}
\xi_0:=\text{the $(\R\beta_{ss})^{\perp}$-component of $(\mu_0)_{ss}+\rho_{\mathfrak{k}_{ss}}$} 
\end{align*}
and $\Delta_{\lambda_{ss}}\subset \im\mathfrak{t}_{ss}^{\vee}$ 
(resp. $\Delta_{\lambda_{ss}}^+$) be the set of roots (resp. positive roots) for $(\mathfrak{k}_{ss},\mathfrak{t}_{ss})$ orthogonal to $\lambda_{ss}\in \im\mathfrak{t}_{ss}^{\vee}$, and $W_{\beta_{ss}}$ the Weyl group of $\Delta_{\beta_{ss}}$. 
The longest element of $W$ with respect to the positive system and that of $W_{\beta_{ss}}$ are denoted by $w_l$ and $w_{\beta_{ss},l}$ respectively. 

\begin{lem}\label{two-lines}
Let 
\begin{align*}
w_0:=w_lw_{\beta_{ss},l}.
\end{align*} 
Then $w_0$ is the unique nontrivial element in $W$ such that the cardinality of $w_0((\mu_0)_{ss}+\rho_{\mathfrak{k}_{ss}}+\R\beta_{ss})\cap L_+$ is infinity.
\end{lem}
\begin{proof}
Let $w\in W$. 
By $\rho_{\mathfrak{k}_{ss}}\in L_+$, we see that $w((\mu_0)_{ss}+\rho_{\mathfrak{k}_{ss}}+\R\beta_{ss})\cap L_+$ is an infinite set if and only if $w\beta_{ss}$ belongs to either $L_+$ or its negative $-L_+$ and 
\begin{align}\label{xi0-positivity}
(\alpha, w\xi_0)\ge 0\text{ for any positive root $\alpha$ with $(\alpha, w\beta_{ss})=0$}.
\end{align}

Let us first assume $w\beta_{ss}\in L_+$. 
Then we have $w\beta_{ss}=\beta_{ss}$ and $w\in W_{\beta_{ss}}$ by $\beta_{ss}\in L_+$ (see \cite[Lemma 10.3.B]{Hum78} for instance). 
Since $(\alpha,\xi_0)=(\alpha, (\mu_0)_{ss}+\rho_{\mathfrak{k}_{ss}})>0$ for all $\alpha\in\Delta^+_{\beta_{ss}}$, the $(\R\beta_{ss})^{\perp}$-component $\xi_0$ belongs to the dominant Weyl chamber for $\Delta_{\beta_{ss}}^+$. 
The Weyl group acts simply transitively on the set of Weyl chambers, hence the condition (\ref{xi0-positivity}) holds if and only if $w$ is the identity element. 

We next suppose $w\beta_{ss}\in -L_+$. 
In the same manner we see $w\beta_{ss}=w_l\beta_{ss}$, and $w_l^{-1}w=w_lw\in W_{\beta_{ss}}$. 
Since $\xi_0$ belongs to the dominant Weyl chamber for $\Delta_{\beta_{ss}}^+$, 
the condition (\ref{xi0-positivity}) is equivalent to $\Delta_{w\beta_{ss}}^+=\Delta_{w\beta_{ss}}\cap w\Delta^+$. 
Since $w_lw$ fixes $\beta_{ss}$, the latter is also equivalent to $\Delta_{\beta_{ss}}-\Delta^+=\Delta_{\beta_{ss}}\cap w_lw\Delta^+$, or $w_lw=w_{\beta_{ss},l}$, which is the desired conclusion. 
\end{proof}

Since $\pi$ is infinite-dimensional and $W$ is a finite group, there exists $\mu\in \mathfrak{t}_{\C}^{\vee}$ such that the restriction $\mu_{ss}$ is a dominant integral weight, the norm $\|\mu_{ss}\|$ is sufficiently large, the $K$-type $V(\mu)$ occurs in $\pi$, and the weight $\mu+\rho_{\mathfrak{k}_{ss}}$ belongs to $\mu_0+\rho_{\mathfrak{k}_{ss}}+\R\beta$ or $w_0(\mu_0+\rho_{\mathfrak{k}_{ss}}+\R\beta)$ by Proposition \ref{criterion} and Lemmas \ref{line-inclusion} and \ref{two-lines}.

When $G/K$ is non-Hermitian, the two lines above coincide: 
\begin{prop}\label{same-line}
Assume that $G/K$ is non-Hermitian and take $w_0\in W$ as in Lemma $\ref{two-lines}$.
Then we have $\mu_0+\rho_{\mathfrak{k}_{ss}}+\R\beta=w_0(\mu_0+\rho_{\mathfrak{k}_{ss}}+\R\beta)$.
\end{prop}
\begin{proof}
It suffices to show that there exists an element $w_0$ of the Weyl group $W$ satisfying $w_0\beta=-\beta$ and $w_0\xi_0=\xi_0$ by the uniqueness in Lemma \ref{two-lines}. 

We first assume that $\mathfrak{g}$ is a complex Lie algebra. 
Let $J$ be the complex structure of $\mathfrak{g}$. 
By $\mathfrak{p}=J\mathfrak{k}$, the highest weight $\beta$ of $\mathfrak{p}$ is the highest root of $\mathfrak{k}$. 
Then the reflection with respect to $\beta$ has the desired properties. 

When $\mathfrak{g}_{\C}$ is simple and $G/K$ is non-Hermitian, we check that $w_0$ in Table \ref{constants2} satisfies the condition by using Tables \ref{constants1} and \ref{constants2}. 
Here we follow the notation of Bourbaki \cite[pp.250--274]{Bou68} for the standard basis, which is compatible with that in Table \ref{classification2}. 
We omit the verification except for the case 
$\mathfrak{g}\cong\mathfrak{e}_{8(-24)}$, 
which is only the case where $\mathfrak{k}$ has an exceptional simple factor. 
In that case, using the notation in Tables \ref{constants1} and \ref{constants2}, we see that $\xi_0=e_2+2e_3+3e_4+4e_5-4e_6-4e_7+4e_8$ is orthogonal to $\eta_1=\frac{1}{2}(e_1-e_2-e_3+e_4+e_5-e_6+e_7-e_8)$, $\eta_2=\frac{1}{2}(-e_1+e_2+e_3-e_4+e_5-e_6+e_7-e_8)$, $e_5+e_6$ and $f_1-f_2$. 
Hence $\xi_0$ is fixed by $w_0=s(e_5+e_6)s(\eta_2)s(\eta_1)s(f_1-f_2)$. 
Moreover, we have 
\begin{align*}
w_0\beta=
&s(e_5+e_6)s(\eta_2)s(\eta_1)\left(e_6-\frac{1}{2}e_7+\frac{1}{2}e_8, -f_1+f_2\right)\\
&=s(e_5+e_6)s(\eta_2)\left(\frac{1}{2}\left(e_1-e_2-e_3+e_4+e_5+e_6\right), -f_1+f_2\right)\\
&=s(e_5+e_6)\left(e_5+\frac{1}{2}e_7-\frac{1}{2}e_8, -f_1+f_2\right)\\
&=\left(-e_6+\frac{1}{2}e_7-\frac{1}{2}e_8, -f_1+f_2\right)
=-\beta,
\end{align*}
which is our claim. 
\end{proof}

The next lemma gives the integral weights on the line $\R\beta_{ss}$. 
It is proven by a case-by-case check, so we omit the proof. 
\begin{lem}\label{period}
The intersection of the weight lattice for $(\mathfrak{k}_{ss}, \mathfrak{t}_{ss})$ with $\R\beta_{ss}$ is $\Z\beta_{ss}/2$ if $\mathfrak{g}$ is isomorphic to $\mathfrak{sp}(n,\R)$ or $\mathfrak{sp}(n,\C)$, and $\Z\beta_{ss}$ otherwise. 
\end{lem}

Let us finish the proof of Theorem \ref{same-K}. 
Consider the set consisting of $\lambda\in\mathfrak{t}_{\C}^{\vee}$ satisfying the following conditions: $\lambda+\rho_{\mathfrak{k}_{ss}}$ belongs to $\mu_0+\rho_{\mathfrak{k}_{ss}}+\R\beta \cup w_0(\mu_0+\rho_{\mathfrak{k}_{ss}}+\R\beta)$, the restriction $\lambda_{ss}$ belongs to $L_+$, and the $K$-type $V(\lambda)$ does not occur in any minimal $(\mathfrak{g}_{\C},K)$-module appearing in Theorem \ref{main-thm}. 
Then the set is finite by Proposition \ref{same-line} and Lemma \ref{period}.
By the argument just before Proposition \ref{same-line}, the given minimal $(\mathfrak{g}_{\C},K)$-module $\pi$ have a common $K$-type with some minimal $(\mathfrak{g}_{\C},K)$-module appearing in Theorem \ref{main-thm}.
\end{proof}

\section{Corollaries}
In this section, we give some corollaries of Theorem \ref{main-thm}. 

Let $G$ be a connected simply connected simple real Lie group not of type $A$, and $\mathfrak{g}, K$ as in Section $\ref{section:preliminaries}$. 
\begin{cor}\label{unitarizability}
	Any minimal $(\mathfrak{g}_{\C},K)$-module (see Definition $\ref{df:min}$) is unitarizable. 
\end{cor}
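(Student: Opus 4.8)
The plan is to leverage the classification obtained in Theorem \ref{main-thm} together with the explicit identifications of each minimal $(\mathfrak{g}_{\C},K)$-module with a concrete representation whose unitarizability is already known in the literature. Since Theorem \ref{main-thm} shows that every minimal $(\mathfrak{g}_{\C},K)$-module is, up to isomorphism, one of the modules listed in Tables \ref{classification1} and \ref{classification2} (or one of the complex cases in part (\ref{main-cpx})), it suffices to verify unitarizability case by case for this finite list of model representations, rather than for an a priori unknown module.

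First I would dispose of the reducible and complex cases. For $\mathfrak{g}\cong\mathfrak{sp}(n,\R)$, the four minimal modules are the two irreducible components of the Weil representation and their complex conjugates; the Weil representation is unitary by construction (see \cite{Fol89}), and the complex conjugate of a unitarizable module is unitarizable. For $\mathfrak{g}=\mathfrak{sp}(n,\C)$, the two minimal modules are restrictions to $\mathfrak{sp}(n,\C)\hookrightarrow\mathfrak{sp}(2n,\R)$ of the components of the Weil representation, and unitarity is inherited under restriction of a unitary representation to a subgroup. More generally, when $\mathfrak{g}_{\C}$ is not simple the module is the Harish-Chandra bimodule $U(\mathfrak{g})/J_0$ (Remark \ref{bimodule}(\ref{bimodule2})), whose unitarizability follows from the work on minimal representations of complex groups.

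Next I would handle the Hermitian cases in Table \ref{classification1}: here each minimal module is a highest or lowest weight module whose minimal $K$-type $V(\mu_0)$ is one-dimensional (except type $C$, already treated). Such unitary highest weight modules were classified and their unitarizability established in the literature (e.g. \cite{EPWW85}); the explicit scalars recorded in the subscripts of the table place $\mu_0$ in the unitarizable range, so unitarity is immediate. For the non-Hermitian simple cases in Table \ref{classification2}, the minimal modules coincide with those constructed by R.~Brylinski and B.~Kostant \cite{BK94M}, which are unitary by construction; alternatively, for the groups with $\R\text{-}\rank(\mathfrak{g})\ge 3$ one may invoke P.~Torasso \cite{Tor97}. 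I would assemble a short table or enumeration matching each row to the reference guaranteeing unitarity.

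The main obstacle I anticipate is purely bookkeeping rather than conceptual: ensuring that the known unitary representation cited in each case has underlying $(\mathfrak{g}_{\C},K)$-module isomorphic to the one in our tables, which by Proposition \ref{criterion} reduces to checking that they share a common $K$-type. A cleaner alternative, which I would mention, is to bypass the case-by-case verification by appealing to the result of J.-S.~Huang and J.-S.~Li \cite{HL99}: for $G$ the universal cover of $\Sp(n,\R)$ or $\SO_0(p,q)$ they classified the irreducible unitary representations with associated variety $\mathcal{O}_{\min}$ via theta lifting, and more generally the remark following Problem \ref{prob} (see Remark \ref{unitarizability-complete prime}) records that an irreducible $(\mathfrak{g}_{\C},K)$-module whose associated variety is $\mathcal{O}_{\min}$ is unitarizable precisely when its annihilator is completely prime. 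Since the annihilator of any minimal module is the Joseph ideal $J_0$, which is completely prime by definition, unitarizability follows uniformly. This second route is the more economical one and I would present it as the principal argument, with the case-by-case matching as corroboration.
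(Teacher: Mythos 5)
Your fallback route---classify via Theorem \ref{main-thm} and then verify unitarizability case by case against known constructions---is exactly the paper's proof. The paper's citations differ in detail (it uses Enright--Howe--Wallach \cite{EHW83} or Jakobsen \cite{Jak83} for the Hermitian cases, Duflo \cite{Duf79} for complex rank two, Vogan \cite[Proposition 15.2]{Vog94} for $\g_{2(2)}$, Kazhdan--Savin \cite{KS90} for the split/complex $D$ and $E$ series, Torasso \cite[Theorem 4.12]{Tor97} for $\R$-$\rank\ge 3$, and Huang \cite[Theorem 3.2]{Hua95} for most complex cases); note that your reference \cite{EPWW85} is the wrong one for unitary highest weight modules, and that in the Hermitian cases one does still have to check that $\mu_0$ lies in the unitarizable range rather than have it be ``immediate'' --- but these are cosmetic repairs, and your observation that Proposition \ref{criterion} reduces the identification of each model with the classified module to exhibiting a common $K$-type is precisely how the matching works.

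The genuine gap is in what you propose as the \emph{principal} argument. The Huang--Li result \cite{HL99}, and with it Remark \ref{unitarizability-complete prime}, is stated and valid only for $G$ the universal cover of $\Sp(n,\R)$ or of $\SO_0(p,q)$; there is no general theorem, in this paper or in \cite{HL99}, asserting that an irreducible $(\mathfrak{g}_{\C},K)$-module with associated variety $\overline{\mathcal{O}_{\min}}$ and completely prime annihilator is unitarizable for an arbitrary simple $G$. Your ``uniform'' deduction therefore says nothing about $\mathfrak{so}^*(2n)$, $\e_{6(-14)}$, $\e_{7(-25)}$, the quaternionic and split exceptional real forms, or any of the complex Lie algebras --- i.e.\ most of the rows of Tables \ref{classification1} and \ref{classification2}. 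Worse, even in the symplectic and orthogonal cases the logic runs backwards: the proof of Remark \ref{unitarizability-complete prime} matches the Huang--Li list with the \emph{unitary globalizations} of the minimal $(\mathfrak{g}_{\C},K)$-modules, so it presupposes Corollary \ref{unitarizability} (indeed the remark appears after it), and using it as the main input would be circular. The case-by-case verification you relegated to ``corroboration'' is in fact the only proof available, which is why the paper carries it out.
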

\begin{proof}
Since minimal $(\mathfrak{g}_{\C},K)$-modules, which are not necessarily unitarizable, are classified in Theorem \ref{main-thm}, 
it suffices to check unitarizability for each of them. 
It follows from the determination of unitary highest weight modules by T.~Enright, R.~Howe and N.~Wallach \cite{EHW83} or H.~P.~Jacobsen \cite{Jak83} when $\mathfrak{g}_{\C}$ is simple and $G/K$ is Hermitian. 
Unitarizability of minimal $(\mathfrak{g}_{\C}, K)$-modules follows from the determination of the unitary dual by Duflo \cite{Duf79} when $\mathfrak{g}$ is a complex Lie algebra of rank two, and 
from that by D.~Vogan \cite[Proposition 15.2]{Vog94} when $\mathfrak{g}=\mathfrak{g}_{2(2)}$. Unitarizability follows from the construction of minimal representations by D.~Kazhdan and G.~Savin \cite{KS90} when $\mathfrak{g}=\mathfrak{so}(2n,\C), 
\mathfrak{so}(n,n)(n\ge 4), \e_6(\C), \e_{6(6)}, \e_7(\C), \e_{7(7)}, \e_8(\C)$ or $\e_{8(8)}$, and 
from that by P.~Torasso \cite[Theorem 4.12]{Tor97} when the real rank of $\mathfrak{g}$ is not less than three. 
When $\mathfrak{g}$ is a complex Lie algebra not of type $G_2$, $E_6$ or $E_8$, unitarizability of minimal $(\mathfrak{g}_{\C}, K)$-modules is proven by J.-S.~Huang \cite[Theorem 3.2]{Hua95}. 
There are also other proofs of unitarizability of minimal $(\mathfrak{g}_{\C}, K)$-modules by using various geometric models. 
See for example T. Kobayashi and B. \O rsted \cite[Corollary 3.9.2]{KO03i} for $\mathfrak{g}=\mathfrak{o}(p,q)$ $(p,q\ge 2, p+q\ge 8 \text{ even})$, and J. Hilgert, T. Kobayashi and J. M\"{o}llers \cite{HKM14} for the conformal algebras of Jordan algebras. 
\end{proof}

Moreover, Theorem \ref{main-thm} and the construction of each minimal $(\mathfrak{g}_{\C},K)$-module by P.~Torasso describe the restriction of its unitary globalization to minimal parabolic subgroups. 
\begin{cor}[{\cite[Theorem 4.12]{Tor97}}]\label{irreducibility}
Assume that $\mathfrak{g}_{\C}$ is simple not of type $A$ and the real rank of $\mathfrak{g}$ is not less than three. 
Let $Q$ be a minimal parabolic subgroup of $G$. 
Then the unitary globalization of each minimal $(\mathfrak{g}_{\C},K)$-module is still irreducible as a unitary representation of $Q$. 
\end{cor}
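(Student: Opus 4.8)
The plan is to derive the statement by combining the completeness of the classification in Theorem \ref{main-thm} with P.~Torasso's explicit construction \cite{Tor97}, in which the irreducibility of the restriction to a minimal parabolic subgroup is already established. No new analytic input beyond \cite{Tor97} is needed; the task is to verify that our classification matches exactly the representations Torasso constructs, so that his irreducibility result applies to every minimal module.

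First I would note that, under the hypotheses $\mathfrak{g}_{\C}$ simple not of type $A$ and $\R$-$\rank(\mathfrak{g})\ge 3$, every minimal $(\mathfrak{g}_{\C},K)$-module appears in Theorem \ref{main-thm}, and by Remark \ref{Vog-K-type}(2) each of these is realized by Torasso's construction. By Corollary \ref{unitarizability} all minimal modules are unitarizable, so each possesses a unitary globalization, which is unique up to unitary equivalence; it therefore suffices to exhibit a single unitary completion whose restriction to $Q$ is irreducible.

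Next I would invoke \cite[Theorem 4.12]{Tor97}, where Torasso realizes the minimal unitary representation on a geometric model on which the minimal parabolic $Q$ acts irreducibly. Since the underlying $(\mathfrak{g}_{\C},K)$-module of Torasso's representation coincides with the given minimal module by the previous step, their unitary globalizations agree, and the irreducibility of the $Q$-action transfers. The only genuine obstacle is the bookkeeping confirming that the list in Theorem \ref{main-thm} exhausts precisely Torasso's representations when $\R$-$\rank(\mathfrak{g})\ge 3$; the substantive content---the irreducibility of the restriction to $Q$---is entirely contained in \cite{Tor97} and need not be reproved here.
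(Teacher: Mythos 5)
Your proposal is correct and follows essentially the same route as the paper, which gives no separate proof but derives the corollary exactly as you do: the classification in Theorem \ref{main-thm} (via Remark \ref{Vog-K-type}(2)) shows that Torasso's construction exhausts all minimal $(\mathfrak{g}_{\C},K)$-modules when $\R$-$\rank(\mathfrak{g})\ge 3$, and the irreducibility under the minimal parabolic $Q$ is then imported wholesale from \cite[Theorem 4.12]{Tor97}. Your added remark on the uniqueness of the unitary globalization (via Corollary \ref{unitarizability} and Schur's lemma for the invariant inner product) is the one detail the paper leaves tacit, and it is correctly supplied.
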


In addition to Corollaries \ref{unitarizability} and \ref{irreducibility}, we have the following remark. 
\begin{rem}\label{unitarizability-complete prime}
Let $G$ be the universal cover of $\Sp(n,\R)$ $(n\ge 2)$ or that of the identity component of the indefinite special orthogonal group $\SO(p,q)$ $(p,q\ge 3, p+q\ge 7)$. 
Let $V$ be an irreducible $(\mathfrak{g}_{\C},K)$-module where the associated variety of the annihilator is the closure of the minimal nilpotent orbit $\mathcal{O}_{\min}$ in $\mathfrak{g}_{\C}$. 
Then $V$ is unitarizable if and only if the annihilator of $V$ is completely prime. 

Let us prove the assertion. Irreducible unitary representations of $G$ under consideration whose associated varieties are the closure of the minimal nilpotent orbit $\mathcal{O}_{\min}$ are classified by J.-S.~Huang and J.-S.~Li \cite{HL99} in terms of local theta lifting. 
Since their classification results agree with the unitary globalizations of minimal $(\mathfrak{g}_{\C},K)$-modules, the assertion follows. 
\end{rem}
\bibliographystyle{amsplain}
\Addresses
\bibliography{references}
\begin{landscape}
    \begin{table}[ht]
  	\begin{center}
	\begin{threeparttable}
  	\caption{Data $1$ of minimal $(\mathfrak{g}_{\C},K)$-modules when $\mathfrak{g}_{\C}$ is simple and $G/K$ is non-Hermitian}
	\label{constants1}
  	\begin{tabular}{c|ccc}\toprule
  		$\mathfrak{g}$&$\rho_{\mathfrak{k}_{ss}}$&$\mu_0$&$\beta$\\\hline\hline
  		$\mathfrak{so}(2n,2m)$&$((n-1,n-2,\cdots,0),$&$(0,(n-m,0,\cdots,0))$
		&$((1,0,\cdots,0),(1,0,\cdots,0))$\\
  		$(n\ge m\ge 2)$&\ \ $(m-1,m-2,\cdots,0))$&&\\\hline
  		$\mathfrak{so}(2n+1,2m+1)$&$((n-1/2,n-3/2,\cdots,1/2),$
		&$(0,(n-m,0,\cdots,0))$&$((1,0,\cdots,0),(1,0,\cdots,0))$\\
  		$(n\ge m\ge 1, n+m\ge 3)$&\ \ $(m-1/2,m-3/2,\cdots,1/2))$\\\hline
  		$\mathfrak{so}(2n,3)$&$((n-1,n-2,\cdots,0),(1,-1))$&$(0,(2n-3,-2n+3))$&$((1,0,\cdots,0),(2,-2))$\\
		$(n\ge 2)$&&&\\\hline
  		$\f_{4(4)}$&$((3,2,1),(1,-1))$&$(0,(1,-1))$&$((1,1,1),(1,-1))$\\\hline
  		$\e_{6(2)}$&$((5/2,3/2,\cdots,-5/2),(1,-1))$&$(0,(2,-2))$&$(1/2(1,1,1,-1,-1,-1),(1,-1))$\\\hline
  		$\e_{7(-5)}$&$((5,4,\cdots,0),(1,-1))$&$(0,(4,-4))$&$(1/2(1,1,1,1,1,1),(1,-1))$\\\hline
  		$\e_{8(-24)}$&$((0,1,2,3,4,5,-17/2,17/2),(1,-1))$
		&$(0,(8,-8))$&$((0,0,0,0,0,1,-1/2,1/2),(1,-1))$\\\hline
  		$\g_{2(2)}$&$((1,-1),(1,-1))$&$((2,-2),0)$&$((3,-3),(1,-1))$\\\hline
  		$\e_{6(6)}$&$(4,3,2,1)$&$0$&$(1,1,1,1)$\\\hline
  		$\e_{7(7)}$&$(7/2,5/2,\cdots,-7/2)$&$0$&$1/2(1,1,1,1,-1,-1,-1,-1)$\\\hline
  		$\e_{8(8)}$&$(7,6,\cdots,1,0)$&$0$&$1/2(1,1,\cdots,1)$\\\bottomrule
  	\end{tabular}
	\begin{tablenotes}
	  \item[*] We write $\mu_0$ for the highest weight of the minimal $K$-type of a minimal representation, 
	  and $\beta$ for that of $K$-module $\mathfrak{p}_{\C}$.
	\end{tablenotes}
	\end{threeparttable}
    \end{center}
    \end{table}
\end{landscape}
\begin{landscape}
    \begin{table}[htb]
    \begin{center}
    \begin{threeparttable}
	\caption{Data $2$ of minimal $(\mathfrak{g}_{\C},K)$-modules when $\mathfrak{g}_{\C}$ is simple and $G/K$ is non-Hermitian}
	\label{constants2}
    \begin{tabular}{c|cc}\toprule
  		$\mathfrak{g}$&$\xi_0$&$w_0$\\\hline\hline
  		$\mathfrak{so}(2n,2m)$&$((0,n-2,n-3,\cdots,0),$
		&$s(e_1+e_n)s(e_1-e_n)s(f_1+f_m)s(f_1-f_m)$
		\\
  		$(n\ge m\ge 2)$&\ \ $(0,m-2,m-3,\cdots,0))$\\\hline
  		$\mathfrak{so}(2n+1,2m+1)$&$((0,n-3/2,n-5/2,\cdots,1/2),$&$s(e_1)s(f_1)$\\
  		$(n\ge m\ge 1, n+m\ge 3)$&\ \ $(0,m-3/2,m-5/2,\cdots,1/2))$\\\hline
  		$\mathfrak{so}(2n,3)$&$((0,n-2,\cdots,0),0)$&$s(e_1+e_n)s(e_1-e_n)s(f_1-f_2)$\\
		$(n\ge 2)$&&\\\hline
  		$\f_{4(4)}$&$((1,0,-1),0)$&$s(e_1+e_3)s(e_2)s(f_1-f_2)$\\\hline
  		$\e_{6(2)}$&$((1,0,-1,1,0,-1),0)$&$s(e_1-e_4)s(e_2-e_5)s(e_3-e_6)s(f_1-f_2)$\\\hline
  		$\e_{7(-5)}$&$((5/2,3/2,\cdots,-5/2),0)$&$s(e_1+e_6)s(e_2+e_5)s(e_3+e_4)s(f_1-f_2)$\\\hline
  		$\e_{8(-24)}$&$((0,1,2,3,4,-4,-4,4),0)$&$s(e_5+e_6)s(\eta_2)s(\eta_1)s(f_1-f_2)$\tnote{*a}\\\hline
  		$\g_{2(2)}$&$0$&$s(e_1-e_2)s(f_1-f_2)$\\\hline
  		$\e_{6(6)}$&$(3/2,1/2,-1/2,-3/2)$&$s(e_1+e_4)s(e_2+e_3)$\\\hline
  		$\e_{7(7)}$&$(3/2,1/2,-1/2,-3/2,$&$s(e_1-e_5)s(e_2-e_6)s(e_3-e_7)s(e_4-e_8)$\\
  		&$3/2,1/2,-1/2,-3/2)$\\\hline
  		$\e_{8(8)}$&$(7/2,5/2,\cdots,-7/2)$&$s(e_1+e_8)s(e_2+e_7)s(e_3+e_6)s(e_4+e_5)$\\\bottomrule
  	\end{tabular}
	\begin{tablenotes}
	  \item[*] We write $\xi_0$ for the $(\R\beta_{ss})^{\perp}$-component of $\mu_0+\rho_{\mathfrak{k}_{ss}}$. See Lemma $\ref{two-lines}$ for the definition of $w_0$. 
	  \item[*] $s(\eta)$ denotes the reflection with respect to the root $\eta$, and the standard basis with respect to the first (resp. second) component of $K$ in Table $\ref{classification2}$ is denoted by 
	  $e_i$ (resp. $f_i$). 
	  \item[*a] $\eta_1:=\frac{1}{2}(e_1-e_2-e_3+e_4+e_5-e_6+e_7-e_8)$, 
	  $\eta_2:=\frac{1}{2}(-e_1+e_2+e_3-e_4+e_5-e_6+e_7-e_8)$.
	\end{tablenotes}
	\end{threeparttable}
    \end{center}
   \end{table}
\end{landscape}
\end{document}